\newtheorem{theorem}{Theorem}[section]
\newtheorem{definition}[theorem]{Definition}
\newtheorem{lemma}[theorem]{Lemma}
\newtheorem{proposition}[theorem]{Proposition}
\newtheorem{corollary}[theorem]{Corollary}
\title[Equivariant vector bundles over classifying spaces for proper
  actions]{Equivariant vector bundles over classifying spaces for
  proper actions} 
\date{\today}
\author{Dieter Degrijse and Ian J. Leary}
\thanks{The first author was supported by the Danish National Research
  Foundation through the Centre for Symmetry and Deformation (DNRF92)}  
\newcommand{\mF}{\mathcal {F}}
\newcommand{\orb}{\mathcal{O}_{\mF}G}
\newcommand{\col}{\colon\thinspace} 
\begin{document}

\begin{abstract} Let $G$ be an infinite discrete group and let
  $\underline{E}G$ be a classifying space for proper actions of
  $G$. Every $G$-equivariant vector bundle over $\underline{E}G$ gives
  rise to a compatible collection of representations of the finite
  subgroups of $G$. We give the first examples of groups $G$ with a
  cocompact classifying space for proper actions $\underline{E}G$
  admitting a compatible collection of representations of the finite
  subgroups of $G$ that does not come from a $G$-equivariant (virtual)
  vector bundle over $\underline{E}G$.  This implies that the
  Atiyah-Hirzeburch spectral sequence computing the $G$-equivariant
  topological $\mathrm{K}$-theory of $\underline{E}G$ has non-zero
  differentials. On the other hand, we show that for right angled
  Coxeter groups this spectral sequence always collapses at the second
  page and compute the $\mathrm{K}$-theory of the classifying space of a right
  angled Coxeter group. 
\end{abstract}

\maketitle
\section{Introduction}
Let $G$ be an infinite discrete group and $\mathcal{F}$ be the family
of finite subgroups of $G$. Recall that the orbit category $\orb$ is 
a category whose objects are the transitive $G$-sets $G/H$, for all $H
\in \mathcal{F}$, and whose morphism are all $G$-equivariant maps
between the objects. A classifying space for proper 
actions of $G$, denoted by $\underline{E}G$, is a proper
$G$-CW-complex such that the fixed point set $\underline{E}G^H$ is
contractible for every $H\in \mathcal{F}$. The space $\underline{E}G$
is said to be cocompact if the orbit space $G\setminus
\underline{E}G=\underline{B}G$ is compact. Many interesting classes of
groups $G$ have cocompact models for $\underline{E}G$, for example 
cocompact lattices in Lie groups, mapping class groups of surfaces,
$\mathrm{Out}(F_n)$, CAT(0)-groups and word-hyperbolic groups. We
refer the reader to \cite{Luck2} for more examples and details.

Now assume $G$ is an infinite discrete group admitting a cocompact
classifying space for proper actions $\underline{E}G$. If \[\xi\col E
\rightarrow \underline{E}G\] is a $G$-equivariant complex vector
bundle over $\underline{E}G$ (see Definition \ref{def: vector bundle})
and $x$ is a point of $\underline{E}G$, then the fiber $\xi^{-1}(x)$
is a complex representation of the finite isotropy group $G_x$. The
connectivity of the fixed point sets of $\underline{E}G$ ensures that
these representations are compatible (see Definition \ref{def:
compatible}) with one another as $x$ and hence $G_x$
varies. Therefore, every $G$-equivariant complex vector bundle over
$\underline{E}G$ gives rise to a compatible collection of complex
representations of the finite subgroups of $G$, and hence to an
element of \[\lim_{G/H \in \orb}R(H).\] Here, $\lim_{G/H \in
\orb}R(H)$ is the limit over the orbit category $\orb$ of the
contravariant representation ring functor
\[    R(-)\col \orb \rightarrow \mathrm{Ab}\qquad G/H \mapsto R(H).  \]
Denoting the Grothendieck group of the abelian monoid of isomorphism
classes of complex $G$-vector bundles over $\underline{E}G$ by
$\mathrm{K}^{0}_G(\underline{E}G)$, one obtains a map 
\[  \varepsilon_G\col  \mathrm{K}^{0}_G(\underline{E}G) 
\rightarrow  \lim_{G/H \in \orb}R(H)  \]
that maps a formal difference of (isomorphism classes) vector bundles
(i.e.~a virtual vector bundle) to a  formal difference of (isomorphism
classes) of compatible collections of representations of the finite
subgroups of $G$. We say a compatible collection of represesentations
of the finite subgroups of $G$ can be realized as a (virtual)
$G$-equivariant vector bundle over $\underline{E}G$ if there exists a
(virtual) $G$-equivariant vector bundle over $\underline{E}G$ that
maps to this collection under $\varepsilon_G$. One can also look at
the corresponding situation for real (orthogonal) vector bundles and
real (orthogonal) representations and obtain the map 

\[  \varepsilon_G\col \mathrm{KO}^{0}_G(\underline{E}G) \rightarrow  \lim_{G/H
  \in \orb}RO(H).  \] The maps $\varepsilon_G$ are equal to the edge
homomorphisms of certain Atiyah-Hirzebruch spectral sequences
converging to $\mathrm{K}^{\ast}_G(\underline{E}G)$ and
$\mathrm{KO}^{\ast}_G(\underline{E}G)$ (see (\ref{eq: ss complex}) and
(\ref{eq: ss real})). L\"{u}ck and Oliver proved that (see Proposition
\ref{prop: rational collapse}) the map $\varepsilon_G$ (real or
complex) is rationally surjective, meaning that a high enough multiple
of every element in the target of $\varepsilon_G$ is contained in the
image of $\varepsilon_G$. In the last paragraph of
\cite[p. 596]{LuckOliver} L\"{u}ck and Oliver ask for an example of a
group $G$ admitting a cocompact classifying space for proper actions
$\underline{E}G$ such that $\varepsilon_G$ is not surjective. In
Section \ref{sec: complex} of this paper we give the first example of
such a group in the complex case.  In Section \ref{sec: real} we give
the first example of such a group in the real case. We also construct
examples of groups $G$ admitting a cocompact $\underline{E}G$ with the
following weaker property: $G$ admits a compatible collection of
representations for its finite subgroups that cannot be realized as a
$G$-vector bundle over $\underline{E}G$.  However, for these examples
we cannot exclude the possibility that there exists a virtual vector
bundle that maps to this collection of representations under
$\varepsilon_G$.  On the other hand, these examples are more explicit
and lower dimensional.

In the final section we show that for a right angled Coxeter group $W$,
every compatible collection of representations of the finite subgroups
of $W$ can be realized as a $W$-equivariant vector bundle over
$\underline{E}W$, so that the map
\[     \varepsilon_W\col \mathrm{K}^{0}_W(\underline{E}W) \rightarrow  \lim_{W/H
  \in \mathcal{O}_{\mathcal{F}}W}R(H).  \] is always
surjective. Morever, we show that this map is actually an isomorphism
and that  (see Theorem \ref{eq:
cohomology theory}) \[\mathrm{K}^{1}_W(\underline{E}W)=0.\]
Using a version of the Atiyah-Segal completion theorem for infinite
discrete groups proven by L\"{u}ck and Oliver, we use these results to
compute the complex $\mathrm{K}$-theory of $BW$, the classifying space of $W$
(see Corollary \ref{cor: ktheory}).

\section*{Acknowledgement} 
We thank the referee, whose comments on an earlier version of this
article were extremely helpful.  

\section{$G$-vector bundles and isotropy representations }
Let $G$ be a discrete group and let $\Gamma$ be a Lie group.  Let
$\mathcal{S}$ be a family of finite subgroups of $G$, i.e. any
collection of finite subgroups of $G$ that is closed under conjugation
and passing to subgroups. The orbit category
$\mathcal{O}_{\mathcal{S}}G$ is a category whose objects are the
transitive $G$-sets $G/H$, for all $H \in \mathcal{S}$, and whose
morphism are all $G$-equivariant maps between the objects.

\begin{definition}\cite[p.~590]{LuckOliver} \rm \label{def: compatible} 
Let $X$ be a $G$-CW-complex. A $(G,\Gamma)$-bundle over $X$ is a
$\Gamma$-principal bundle $p\col E\rightarrow X$, where $E$ is a left
$G$-space such that $p$ is $G$-equivariant and such that the left
$G$-action and the right $\Gamma$-action on $E$ commute. We denote the
set of isomorphism classes of $(G,\Gamma)$-bundles over $X$ by
$\mathrm{Bdl}_{(G,\Gamma)}(X)$.  For $H \in \mathcal{F}$, let
	\[   \mathrm{Rep}_{\Gamma}(H)= 
\mathrm{Hom}(H,\Gamma)/\mathrm{Inn}(\Gamma). \]
 One can consider $\mathrm{Rep}_{\Gamma}(-)$ as a contravariant functor from
$\mathcal{O}_{\mathcal{S}}G$ to $\mathrm{Sets}$. An element of the
limit
\[A=([\alpha_H])_{H \in\mathcal{S}}\in \lim_{G/H \in
	\mathcal{O}_{\mathcal{S}}G}\mathrm{Rep}_{\Gamma}(H)\] is a
called an \emph{$\mathcal{S}$-compatible collection of
  $\Gamma$-representations}. Given such an element $A$, let
$\mathcal{S}_{A}$ be the family of subgroups of $G \times \Gamma$
consisting of conjugates of the subgroups of the form
\[ \{(h,\alpha_H(h)) \ |  h \in H \}    \] 
for all $H \in \mathcal{S}$ and let $E_{\mathcal{S}}(G,A)$ be the 
universal $G\times \Gamma$-CW-complex for the family $\mathcal{S}_{A}$.
\end{definition} 

\begin{lemma}\label{lemma: uni bun} \cite[Lemma 2.4]{LuckOliver} 
For every $\mathcal{S}$-compatible collection of
$\Gamma$-representations $A=([\alpha_H])_{H \in\mathcal{S}}$ there
exists a $G$-CW-complex $B_{\mathcal{S}}(G,A)$ with isotropy in
$\mathcal{S}$ satisfying the following properties.
\begin{itemize}
\item [-] The quotient map $$\pi\col E_{\mathcal{S}}(G,A)\rightarrow 
\Gamma \setminus E_{\mathcal{S}}(G,A) =B_{\mathcal{S}}(G,A)  $$ 
is a $(G,\Gamma)$-bundle over the $G$-CW-complex $B_{\mathcal{S}}(G,A)$.	 
\item[-] The $(G,\Gamma)$-bundle $\pi\col E_{\mathcal{S}}(G,A)\rightarrow 
B_{\mathcal{S}}(G,A)$ is universal in the sense that for every $G$-CW-complex 
$X$ with isotropy in $\mathcal{S}$ there is an  isomorphism
\[  [ X , \mathcal{B}_{\mathcal{S}}(G,A)]_G \xrightarrow{\cong} 
\mathrm{Bld}_{(G,\Gamma)}(X)\]
given by pulling back the univeral bundle $\pi$ along a 
$G$-map $X \rightarrow \mathcal{B}_{\mathcal{S}}(G,A)$.
\item[-] For every $S\in \mathcal{S}$, the fixed point set 
$B_{\mathcal{F}}(G,A)^H$ is homotopy equivalent to $BC_{\Gamma}(\alpha_H)$, 
the classifying space of the centralizer of the image of $\alpha_H$ 
in $\Gamma$. 	
		
\end{itemize}

\end{lemma}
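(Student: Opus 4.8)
The plan is to take $B_{\mathcal{S}}(G,A) = \Gamma \backslash E_{\mathcal{S}}(G,A)$, where $\Gamma$ is identified with the subgroup $\{1\}\times\Gamma \leq G\times\Gamma$, and to take $\pi$ to be the quotient map; the residual action of $G = G\times\{1\}$ then makes $B_{\mathcal{S}}(G,A)$ a $G$-space and $\pi$ a $G$-map. First I would check that $\Gamma$ acts freely on $E_{\mathcal{S}}(G,A)$. Every isotropy group of $E_{\mathcal{S}}(G,A)$ lies in $\mathcal{S}_A$, hence is a conjugate of a graph subgroup $\{(h,\alpha_H(h)) : h\in H\}$; such a subgroup meets $\{1\}\times\Gamma$ only in the identity, because $(h,\alpha_H(h)) = (1,\gamma)$ forces $h=1$ and then $\gamma = \alpha_H(1) = 1$. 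Thus all stabilizers of the $\Gamma$-action are trivial.

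Next I would show that $\pi$ is a $(G,\Gamma)$-bundle. Restricting the $(G\times\Gamma)$-CW-structure to $\Gamma$ exhibits $E_{\mathcal{S}}(G,A)$ as a free $\Gamma$-CW-complex: an equivariant cell $(G\times\Gamma)/K \times D^n$, with $K$ a graph subgroup over $H\in\mathcal{S}$, restricts to a principal $\Gamma$-bundle over $(G/H)\times D^n$, and since the base $G/H$ is discrete this bundle is trivial. The resulting cellwise trivializations assemble, via the $\Gamma$-equivariant attaching maps, into a numerable principal $\Gamma$-bundle structure on $\pi$. Because the $G$- and $\Gamma$-actions on $E_{\mathcal{S}}(G,A)$ commute, $\pi$ is $G$-equivariant, and so it is a $(G,\Gamma)$-bundle over the $G$-CW-complex $B_{\mathcal{S}}(G,A)$; the isotropy of $B_{\mathcal{S}}(G,A)$ lies in $\mathcal{S}$ because each $K\in\mathcal{S}_A$ projects isomorphically onto a member of $\mathcal{S}$.

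For universality I would exploit the dictionary between $(G,\Gamma)$-bundles and $(G\times\Gamma)$-spaces. Given a $G$-CW-complex $X$ with isotropy in $\mathcal{S}$ and a $(G,\Gamma)$-bundle $q\colon P\to X$ inducing the collection $A$, the total space $P$ is a $(G\times\Gamma)$-CW-complex on which $\Gamma$ acts freely; over a point $x$ with $G_x = H$ the $(G\times\Gamma)$-stabilizer of a point of the fibre is the graph of the isotropy representation at $x$, namely $[\alpha_H]$, so every isotropy group of $P$ lies in $\mathcal{S}_A$. The universal property of $E_{\mathcal{S}}(G,A) = E_{\mathcal{S}_A}(G\times\Gamma)$ then produces a $(G\times\Gamma)$-map $P\to E_{\mathcal{S}}(G,A)$, unique up to $(G\times\Gamma)$-homotopy, which descends to a $G$-map $X = \Gamma\backslash P \to B_{\mathcal{S}}(G,A)$. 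Conversely, pulling back $\pi$ along a $G$-map $X\to B_{\mathcal{S}}(G,A)$ produces a $(G,\Gamma)$-bundle over $X$, and I would verify that the two constructions are mutually inverse and homotopy-invariant, yielding the bijection $[X,B_{\mathcal{S}}(G,A)]_G \cong \mathrm{Bdl}_{(G,\Gamma)}(X)$. Here one should note that a $G$-map into $B_{\mathcal{S}}(G,A)$ must send an $H$-fixed point into $B_{\mathcal{S}}(G,A)^H$, which by the final step forces the induced isotropy representation to be $[\alpha_H]$; thus exactly the bundles inducing $A$ are classified.

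Finally, for the fixed-point sets, set $\Delta_H = \{(h,\alpha_H(h)) : h\in H\}\in\mathcal{S}_A$, so that $E_{\mathcal{S}}(G,A)^{\Delta_H}$ is contractible. An element $(1,\gamma)$ normalizes $\Delta_H$ precisely when $\gamma\alpha_H(h)\gamma^{-1} = \alpha_H(h)$ for all $h$, that is, when $\gamma\in C_\Gamma(\alpha_H)$, so $C_\Gamma(\alpha_H)$ acts on $E_{\mathcal{S}}(G,A)^{\Delta_H}$, and freely since $\Gamma$ acts freely on $E_{\mathcal{S}}(G,A)$. I would then identify $B_{\mathcal{S}}(G,A)^H$ with $C_\Gamma(\alpha_H)\backslash E_{\mathcal{S}}(G,A)^{\Delta_H}$ by showing that every $H$-fixed $\Gamma$-orbit has a representative fixed by $\Delta_H$, unique up to $C_\Gamma(\alpha_H)$; since a free action of the Lie group $C_\Gamma(\alpha_H)$ on a contractible CW-complex yields a model for its classifying space, this gives $B_{\mathcal{S}}(G,A)^H \simeq BC_\Gamma(\alpha_H)$. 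The main obstacle I anticipate is the careful verification of local triviality — that the free $\Gamma$-CW-structure genuinely produces a locally trivial principal bundle with sufficiently many local sections, and the analogous point in the fixed-point identification — together with the naturality checks needed to upgrade the bijection of the previous step to an isomorphism of functors.
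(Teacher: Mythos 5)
The paper gives no proof of this lemma: it is quoted verbatim from L\"uck--Oliver \cite[Lemma 2.4]{LuckOliver}, and your argument is essentially a reconstruction of the standard proof given there (free $\Gamma$-action since graph subgroups meet $1\times\Gamma$ trivially, cellwise triviality of the principal bundle, the dictionary between $(G,\Gamma)$-bundles and $(G\times\Gamma)$-spaces with isotropy in $\mathcal{S}_A$, and the identification $B_{\mathcal{S}}(G,A)^H\simeq C_\Gamma(\alpha_H)\backslash E_{\mathcal{S}}(G,A)^{\Delta_H}$). Your outline is sound, including the observation that the target of the classifying bijection should really be the set of $(G,\Gamma)$-bundles whose isotropy representations realize $A$ rather than all of $\mathrm{Bdl}_{(G,\Gamma)}(X)$ --- a point on which the statement as reproduced here is slightly imprecise, and which is exactly how the lemma is applied later in the paper.
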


If $\Gamma=\mathrm{U}(n)$ ($\Gamma=\mathrm{O}(n)$) and
$\mathcal{S}=\mathcal{F}$, the family of all finite subgroups of $G$,
then $\mathrm{Rep}_{\Gamma}(H)$ is the set of isomorphism classes of
$n$-dimensional complex (real) representations of $H$. In this case,
an element of the limit
\[A=([\alpha_H])_{H \in\mathcal{F}}\in \lim_{G/H \in
	\mathcal{O}_{\mathcal{F}}G}\mathrm{Rep}_{\Gamma}(H)\] is a called
is called a \emph{compatible collection of complex (real) $n$-dimensional
	representations} of the finite subgroups of $G$. 
For $H \in
\mathcal{F}$, let $R(H)$ ($RO(H)$) be the complex (real)
representation ring of $H$, i.e.~the Grothendieck group of the
abelian cancellative monoid of isomorphism classes of finite
dimensional complex (real) representations of $H$. Note that
$\mathrm{Rep}_{\mathrm{U(n)}}(H)$ is naturally a subset of $R(H)$ and
$\mathrm{Rep}_{\mathrm{O(n)}}(H)$ is naturally a subset of
$RO(H)$. One can consider $R(-)$ as a functor from
$\mathcal{O}_{\mathcal{F}}G$ to $\mathrm{Ab}$. An element of the
inverse limit
\[\alpha=([\alpha_H])_{H \in\mathcal{F}}\in \lim_{G/H \in
  \mathcal{O}_{\mathcal{F}}G}R(H)\] 
is a called a \emph{compatible collection of complex virtual
representations} of the finite subgroups of $G$.  One has a 
natural embedding 
\[ \lim_{G/H \in \mathcal{O}_{\mathcal{F}}G}
\mathrm{Rep}_{\mathrm{U}(n)}(H)
\subset  \lim_{G/H \in \mathcal{O}_{\mathcal{F}}G}R(H).\] 
The analogous statements for
$\mathrm{O}(n,\mathbb{R})$ and $RO$ also hold.

Now let $X$ be a proper cocompact $G$-CW-complex, i.e.~$X$ has finite
isotropy and the orbit 
space $G\setminus X$ has a finite number of cells, such that for every
$H\in \mathcal{F}$, the fixed point set 
$X^H$ is non-empty and connected.

\begin{definition}[\cite{Segal}]\label{def: vector bundle} \rm  A complex (real)
  $G$-vector bundle over $X$ is a complex (real) vector bundle $\pi\col E
  \rightarrow X$ such that $\pi$ is $G$-equivariant and each $g\in G$
  acts on $E$ and $X$ via a bundle isomorphism. An isomorphism of
  $G$-vector bundles over $X$ is just an isomorphism of vector bundles
  that is $G$-equivariant. The set of isomorphims classes of complex
  (real) $G$-vector bundles over $X$ will be denoted by
  $\mathrm{Bdl}_{G}(X)$ ($\mathrm{OBdl}_{G}(X)$). For every $x\in X$,
  the fiber $\pi^{-1}(x)$ is denoted by $E_x$. We refer the reader to
  \cite[Section 1]{LuckOliver} and \cite[Section I.9]{Dieck} for
  elementary properties of $G$-vector bundles over proper (cocompact)
  $G$-CW complexes. 

\end{definition}

\begin{theorem}{\rm\cite[Th. 3.2 and 3.15]{LuckOliver} } \label{eq:
  cohomology theory}There exists a $2$-periodic ($8$-periodic)
  equivariant cohomology theory $\mathrm{K}^{\ast}_G(X,A)$ 
($\mathrm{KO}^{\ast}_G(X,A)$)
  on the category of proper $G$-CW-pairs such that when 
  $X$ is cocompact, $\mathrm{K}^{0}_G(X)$
  ($\mathrm{KO}^{0}_G(X)$) is the Grothendieck group of the abelian monoid of
  isomorphism classes of complex (real) $G$-vector bundles over
  $X$. In particular, for every $H \in \mathcal{F}$, $\mathrm{K}^{0}_G(G/H)$
  ($\mathrm{KO}^{0}_G(G/H)$ ) is canonically isomorphic to $R(H)$ ($RO(H)$). 
\end{theorem}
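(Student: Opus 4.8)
The plan is to construct the theory in three stages: define $\mathrm{K}^0_G$ on cocompact proper $G$-CW-complexes out of $G$-vector bundles, represent this functor by a $G$-space so that the cohomology-theory axioms become essentially formal, and finally establish equivariant Bott periodicity. First I would set $\mathrm{K}^0_G(X)$ to be the Grothendieck group of the monoid $\mathrm{Bdl}_G(X)$ and record the elementary properties: pullback makes this a contravariant functor, and $G$-homotopic maps induce the same homomorphism, since $G$-vector bundles over proper cocompact bases are $G$-homotopy invariant (the standard facts collected in \cite[Section 1]{LuckOliver} and \cite[Section I.9]{Dieck}). To obtain a representing object I would feed $\Gamma=\mathrm{U}(n)$ and $\mathcal{S}=\mathcal{F}$ into Lemma \ref{lemma: uni bun}: the space $B_{\mathcal{F}}(G,A)$ classifies $(G,\mathrm{U}(n))$-bundles, hence $n$-dimensional complex $G$-vector bundles; letting $n\to\infty$ and group-completing assembles a $G$-space $\mathbf{K}$ with $[X,\mathbf{K}]_G\cong\mathrm{K}^0_G(X)$ for cocompact $X$.

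Given such a representing $G$-space I would define relative groups $\mathrm{K}^0_G(X,A)$ as the reduced theory of $X/A$ and negative-degree groups $\mathrm{K}^{-n}_G$ by suspending in trivial sphere coordinates, and then extend to arbitrary proper $G$-CW-pairs by writing $X$ as the colimit of its cocompact $G$-subcomplexes and passing to the inverse limit (keeping track of the usual $\lim^1$ term). Because the functor is representable, the long exact sequence of a pair, excision, and additivity are formal consequences of the equivariant Puppe sequence rather than facts to be proven by hand.

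The crux, and the whole content of periodicity, is to produce a Bott isomorphism $\mathrm{K}^n_G(X,A)\xrightarrow{\cong}\mathrm{K}^{n+2}_G(X,A)$ (respectively an $8$-fold one for $\mathrm{KO}$), which both forces the theory to be $2$-periodic ($8$-periodic) and supplies the deloopings making $\mathbf{K}$ into a genuine $G$-$\Omega$-spectrum. Here I would use that properness makes every isotropy group finite, so the Bott map can be analysed one orbit type at a time: a Mayer--Vietoris induction over the skeleta of the cocompact quotient reduces the periodicity isomorphism to its restriction to the fixed data of each finite isotropy group $H$, where it becomes Segal's equivariant Bott periodicity theorem for the finite group $H$. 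A colimit argument then removes the cocompactness hypothesis. This reduction-to-finite-isotropy step is where essentially all the difficulty lies; the real case runs in parallel but invokes the $8$-fold periodicity of $\mathrm{KO}$ for finite groups.

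Finally, for the identification $\mathrm{K}^0_G(G/H)\cong R(H)$, I would use the equivalence between complex $G$-vector bundles over the single orbit $G/H$ and finite-dimensional complex representations of $H$: a bundle maps to its fiber $E_{eH}$, on which the isotropy group $H$ acts, while a representation $V$ maps to the associated bundle $G\times_H V\to G/H$. This equivalence is additive for $\oplus$, hence identifies the monoids of isomorphism classes and their Grothendieck groups, giving a canonical isomorphism $\mathrm{K}^0_G(G/H)\cong R(H)$; replacing $\mathrm{U}(n)$-bundles and complex representations by $\mathrm{O}(n)$-bundles and real representations gives $\mathrm{KO}^0_G(G/H)\cong RO(H)$.
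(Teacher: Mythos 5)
The paper does not prove this statement: it is imported verbatim from L\"uck and Oliver \cite{LuckOliver} (Theorems 3.2 and 3.15), so there is no internal argument to measure your attempt against. That said, your outline tracks the strategy of the cited source fairly closely: one starts from the Grothendieck group of $G$-vector bundles over cocompact proper complexes, passes to the classifying spaces of $(G,\mathrm{U}(n))$-bundles (the spaces of Lemma \ref{lemma: uni bun}) assembled by a $\Gamma$-space/spectrum construction into a representing object, and verifies Bott periodicity by checking it on orbits $G/H$, where it reduces to classical periodicity for the finite group $H$; the identification $\mathrm{K}^0_G(G/H)\cong R(H)$ via $V\mapsto G\times_H V$ is exactly the standard argument. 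Two places where your sketch buries the real work deserve flagging. First, ``letting $n\to\infty$ and group-completing'' is not formal in the equivariant setting: to see that the group completion of the bundle monoid agrees with $[X,\mathbf{K}]_G$ one needs the lemma that every $G$-vector bundle over a \emph{cocompact} proper base embeds as a direct summand of a bundle induced from representations of the isotropy groups; this is precisely where cocompactness enters, and its failure for non-cocompact $X$ is the reason the theorem only identifies $\mathrm{K}^0_G(X)$ with a Grothendieck group of bundles in the cocompact case. Second, your periodicity argument presupposes that the Bott map is already a map of cohomology theories (compatible with boundary maps), which requires having the deloopings before periodicity is proved; L\"uck--Oliver resolve this by building the connective spectrum first and then establishing periodicity for it, rather than using periodicity to produce the spectrum. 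Neither point invalidates your plan, but both are where the substance of the cited proof lies.
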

As usual (see \cite[Section 6]{LuckOliver2} and \cite[Th. 4.7]{DL}),
the skeletal filtration of $X$ induces Atiyah-Hirzebruch spectral
sequences
\begin{equation}  \label{eq: ss complex} 
E_2^{p,q}=\mathrm{H}^{p}_G(X,\mathrm{K}^{q}_G(G/-))  
\Longrightarrow \mathrm{K}_G^{p+q}(X). \end{equation}
and 
\begin{equation}\label{eq: ss real}   
E_2^{p,q}=\mathrm{H}^{p}_G(X,\mathrm{KO}^{q}_G(G/-))  
\Longrightarrow \mathrm{KO}_G^{p+q}(X) \end{equation}
where $\mathrm{H}^{p}_G(X,-)$ denotes Bredon cohomology of 
$X$ (see \cite{Bredon}).
\begin{proposition}\label{prop: rational collapse} {\rm \cite[Prop
      5.8]{LuckOliver2}} If $X$ is a cocompact $G$-CW complex then the
  spectral sequences (\ref{eq: ss complex}) and (\ref{eq: ss real})
  above rationally collapse, meaning that the images of all
  differentials in these spectral sequences consist of torsion
  elements.  
\end{proposition}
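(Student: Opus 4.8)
The plan is to show that after tensoring with $\mathbb{Q}$ both spectral sequences degenerate at the $E_2$-page, which is equivalent to the stated torsion condition on the differentials. I begin by recording the shape of the $E_2$-page. By Theorem \ref{eq: cohomology theory} and Bott periodicity the coefficient system $\mathrm{K}^{q}_G(G/-)$ is the representation ring functor $R(-)$ when $q$ is even and is zero when $q$ is odd; thus $E_2^{p,q}=\mathrm{H}^{p}_G(X;R(-))$ for $q$ even and $E_2^{p,q}=0$ for $q$ odd, and similarly for $\mathrm{KO}$ with $RO(-)$ and the appropriate periodicity. Since $X$ is cocompact it is a finite-dimensional $G$-CW-complex with finitely many orbit types, so each $E_2^{p,q}$ is a finitely generated abelian group and $E_2^{p,q}=0$ for all but finitely many $p$.

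The key input is the equivariant Chern character for proper actions constructed by L\"uck. This is a multiplicative natural transformation of $\mathbb{Z}$-graded equivariant cohomology theories on proper $G$-CW-complexes
\[ \mathrm{ch}_G^{*}\col \mathrm{K}^{*}_G(-)\longrightarrow \prod_{k\in\mathbb{Z}}\mathrm{H}^{*+2k}_G\bigl(-;\,\mathbb{Q}\otimes R(-)\bigr), \]
whose target is a product of shifted ordinary Bredon cohomology theories with rational coefficients. On an orbit $G/H$ the map $\mathrm{ch}_G$ restricts, in each even degree, to the classical rational isomorphism $R(H)\otimes\mathbb{Q}\xrightarrow{\cong}\mathbb{Q}\otimes R(H)$, because the Bredon cohomology of an orbit is concentrated in degree zero. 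Hence $\mathrm{ch}_G$ is an isomorphism after $-\otimes\mathbb{Q}$ on every orbit, and a standard induction over the finitely many cells of $G\setminus X$, using the Mayer--Vietoris sequences of the skeletal filtration together with the five lemma, shows that $\mathrm{ch}_G\otimes\mathbb{Q}$ is an isomorphism for the cocompact complex $X$.

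Next I would compare Atiyah--Hirzebruch spectral sequences. Being a natural transformation of cohomology theories compatible with the skeletal filtration, $\mathrm{ch}_G$ induces a morphism from the rationalized spectral sequence (\ref{eq: ss complex}) to the corresponding spectral sequence of the target theory. On $E_2$-pages this morphism is induced by the isomorphism of coefficient systems $\mathbb{Q}\otimes \mathrm{K}^{q}_G(G/-)\xrightarrow{\cong}\mathbb{Q}\otimes R(-)$ computed above, hence is an isomorphism. The target theory, being a product of shifted copies of ordinary rational Bredon cohomology, has a degenerate Atiyah--Hirzebruch spectral sequence: its higher differentials are rational stable cohomology operations and therefore vanish. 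A downward induction on the page number then forces the differentials of the rationalized sequence (\ref{eq: ss complex}) to vanish as well: if $f_r\col E_r\to E_r'$ is an isomorphism and $d_r'=0$, then $f_r d_r=d_r' f_r=0$ with $f_r$ injective gives $d_r=0$, so $E_{r+1}=E_r$ and $f_{r+1}=f_r$ remains an isomorphism. Thus every rational differential is zero, i.e.\ every integral differential has torsion image, and the real case is identical using the Pontryagin character, the functor $RO(-)$, and the $4$-fold periodicity of $\mathrm{KO}\otimes\mathbb{Q}$.

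The main obstacle is the construction and the orbit-wise computation of the equivariant Chern character $\mathrm{ch}_G$; once this rational natural isomorphism of cohomology theories is in hand, the degeneration of the target spectral sequence and the comparison argument are formal. Alternatively, one can finish by a dimension count: the Chern character identifies $\dim_{\mathbb{Q}}\mathrm{K}^{n}_G(X)\otimes\mathbb{Q}$ with $\sum_{m\equiv n\,(2)}\dim_{\mathbb{Q}}\mathrm{H}^{m}_G(X;\mathbb{Q}\otimes R(-))$, which is exactly the total rational dimension of the $E_2$-page in that parity of total degree; since $E_\infty$ is a subquotient of $E_2$ and the two total dimensions agree, no nonzero rational differential is possible.
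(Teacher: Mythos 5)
The paper does not prove this proposition itself; it quotes it from L\"uck--Oliver \cite{LuckOliver2}, whose Proposition 5.8 is established in essentially the way you propose: rationalize, invoke the equivariant Chern character isomorphism $\mathrm{K}^n_G(X)\otimes\mathbb{Q}\cong\bigoplus_{k}\mathrm{H}^{n+2k}_G(X;\mathbb{Q}\otimes R(-))$ for cocompact proper $X$, and conclude by comparing dimensions with the $E_2$-page. Your argument is correct modulo that (correctly black-boxed) input, and your closing dimension count is the cleanest way to finish, since it sidesteps having to verify that the Chern character induces a morphism of Atiyah--Hirzebruch spectral sequences.
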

By our assumptions on $X$, the zeroth Bredon cohomology group
$\mathrm{H}^{0}_G(X,R(-))$ (resp. $\mathrm{H}^{0}_G(X,RO(H))$), equals
the limit of the functor $R(-)$ (resp. $RO(-)$), over the orbit
category $\mathcal{O}_{\mathcal{F}}G$. Consider the edge homomorphisms
\[  \varepsilon_G\col \mathrm{K}_G^{0}(X) \rightarrow 
\mathrm{H}^{0}_{G}(X,R(-))    \]
and
\[  \varepsilon_G\col \mathrm{KO}_G^{0}(X) \rightarrow 
\mathrm{H}^{0}_{G}(X,RO(-))    \]
of the spectral sequences (\ref{eq: ss complex}) and  (\ref{eq: ss
  real}). If $[\pi]$ is the isomorphism class of an $n$-dimensional
complex $G$-vector bundle $\pi\col E \rightarrow X$, then
$\varepsilon_G([\pi])$ equals  
\[ ([E_{e_H}])_{H \in \mathcal{F}} \in \lim_{G/H \in \mathcal{O}_{\mathcal{F}}G} \mathrm{Rep}_{U(n)}(H)\subset \mathrm{H}^{0}_G(X,R(-))\]
where $[E_{e_H}]$ denotes the isomorphism class in $R(H)$ of the $H$-representation $E_{e_H}$. The corresponding statement for real $G$-vector bundles also holds. Note that it follows from Proposition \ref{prop: rational collapse} that a suitable multiple of every compatible collection of (virtual) real or complex representations of the finite subgroups of $G$ is contained in the image of the edge homomorphism $\varepsilon_G$. \\

Recall that the classifying space for proper actions $\underline{E}G$ is a terminal object in the homotopy category of proper $G$-CW complexes (e.g. \cite[Th. 1.9]{Luck2}). Hence, if $X$ is any proper cocompact $G$-CW complex such that $X^H$ is non-empty and connected for each $H \in \mathcal{F}$, then there exists a $G$-map $X \rightarrow \underline{E}G$ that is unique up to $G$-homotopy and induces commutative diagrams
\[  \xymatrix{ \mathrm{K}_G^{0}(X) \ar[r] &  
\lim_{G/H \in \mathcal{O}_{\mathcal{F}}G}R(H) \\
\mathrm{K}_G^{0}(\underline{E}G) \ar[u] \ar[ur] & } 
\ \ \ \mbox{and} \ \  \xymatrix{ \mathrm{KO}_G^{0}(X) 
\ar[r] &  \lim_{G/H \in \mathcal{O}_{\mathcal{F}}G}RO(H) \\
\mathrm{KO}_G^{0}(\underline{E}G). \ar[u] \ar[ur] & } \] 
Hence, if a compatible collection $\alpha$ of virtual representations
can be realized as a virtual $G$-vector bundle over $\underline{E}G$,
it can also be realized as a virtual $G$-vector bundle over $X$.

\section{Complex vector bundles} \label{sec: complex}
The purpose of this section is to construct a group $G$ with a
cocompact classifying space for proper actions $\underline{E}G$
admitting a compatible collection of complex representations of the
finite subgroups of $G$ that cannot be realized as $G$-equivariant
virtual complex vector bundle over $\underline{E}G$, i.e.~so that the
edge homomorphism 

\[  \varepsilon_{G}\col \mathrm{K}^{0}_{G}(\underline{E}G) 
\rightarrow \lim_{G/H \in \mathcal{O}_{\mathcal{F}}G} R(H). \]
is not surjective.\\

Let $F=C_4 \rtimes C_2$ be the dihedral group of order $8$ where
$\sigma$ is generator for $C_4$ and $\varepsilon$ is a generator of
$C_2$. Let $H=\langle \sigma^2 \rangle$ be the center of $F$, which
has order two and denote the $n$-skeleton of the universal $F/H$-space
$X=E(F/H)$ by $X^n$. We let $F$ act on $X$ and $X^n$ via the
projection onto $F/H$. Consider the complex $1$-dimensional
representation
\[\lambda\col H=\langle \sigma^2 \rangle \rightarrow 
\mathrm{U}(1)=S^{1}: \sigma^2 \mapsto -1. \]
\begin{lemma} \label{lemma: image res} The isomorphism class $[\lambda]$ is contained in $R(H)^{F/H}$. For $k \in \mathbb{Z}$, the multiple $k[\lambda]$ is contained in the image of the restriction map $\mathrm{res}\col R(F)\rightarrow R(H)$ if and only if $k$ is even.

\end{lemma}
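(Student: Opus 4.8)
The plan is to reduce everything to the ordinary character theory of the order-eight dihedral group $F$. First I would dispose of the invariance claim. Since $H = \langle \sigma^2 \rangle$ is the center of $F$, it is normal and $F$ acts trivially on $H$ by conjugation; the action of the Weyl group $W_F(H) = N_F(H)/H = F/H$ on $R(H)$ that appears in the orbit category is induced by this conjugation action, hence it is trivial. Therefore every element of $R(H)$, and in particular $[\lambda]$, lies in $R(H)^{F/H}$.

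For the second statement I would fix the $\mathbb{Z}$-basis of $R(H) = R(C_2)$ given by the classes of the trivial representation $[\mathbf{1}]$ and of $\lambda$, so that $R(H) = \mathbb{Z}[\mathbf{1}] \oplus \mathbb{Z}[\lambda]$, and then describe $\mathrm{res}$ on a basis of $R(F)$. The group $F \cong D_8$ has exactly five irreducible complex representations: four one-dimensional ones and a single two-dimensional one $\rho$. Since the commutator subgroup of $F$ is precisely $\langle \sigma^2 \rangle = H$, all four one-dimensional representations are trivial on $\sigma^2$ and therefore restrict to a multiple of $[\mathbf{1}]$. The two-dimensional representation $\rho$ is faithful with $\rho(\sigma^2) = -\mathrm{Id}$, so its character satisfies $\chi_\rho(\sigma^2) = -2$ and $\rho|_H \cong \lambda \oplus \lambda$, i.e.\ $\mathrm{res}([\rho]) = 2[\lambda]$.

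The upshot is a clean parity statement: for any $V \in R(F)$ the multiplicity of $\lambda$ in $\mathrm{res}(V)$ equals $\langle \mathrm{res}(V), \lambda \rangle_H = \frac{1}{2}\bigl(\chi_V(e) - \chi_V(\sigma^2)\bigr)$, and the computation above shows this inner product is even on each irreducible (it is $0$ on the one-dimensional representations and $2$ on $\rho$), hence even for every $V$. This shows the image of $\mathrm{res}$ is contained in $\mathbb{Z}[\mathbf{1}] \oplus 2\mathbb{Z}[\lambda]$; conversely both $[\mathbf{1}]$ and $2[\lambda] = \mathrm{res}([\rho])$ lie in the image, so the image equals $\mathbb{Z}[\mathbf{1}] \oplus 2\mathbb{Z}[\lambda]$ exactly. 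Since $k[\lambda]$ has vanishing $[\mathbf{1}]$-component, it lies in the image if and only if $k$ is even. There is no serious obstacle here beyond correctly identifying the restriction of the two-dimensional irreducible; the only subtlety worth double-checking is the behavior of the central element $\sigma^2$ under $\rho$, which is exactly what forces the crucial factor of two.
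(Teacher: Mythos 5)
Your proof is correct, and for the harder (``only if'') direction it takes a genuinely different route from the paper. Both arguments handle the invariance claim identically (triviality of the conjugation action since $H=Z(F)$), and both produce $2[\lambda]$ in the image via the two-dimensional irreducible of $F$ (the paper writes it down explicitly as a representation $\tau$ with $\mathrm{res}([\tau])=2[\lambda]$). Where you differ is in proving that odd multiples are excluded: the paper takes an arbitrary $[\mu]-[\rho]\in R(F)$ restricting to $k[\lambda]$, diagonalizes $\mu(\sigma)$, counts the dimensions $s,t$ of the $\pm i$-eigenspaces inside the $(-1)$-eigenspace of $\mu(\sigma^2)$, and uses the conjugacy of $\sigma$ and $\sigma^3$ in $F$ to force $s=t$, so that the $\lambda$-multiplicity of $\mathrm{res}([\mu])$ is even. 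You instead invoke the classification of the irreducibles of $D_8$ (four linear characters killing the commutator subgroup $H$, one faithful two-dimensional $\rho$ with $\rho(\sigma^2)=-\mathrm{Id}$) and evaluate $\langle \mathrm{res}(V),\lambda\rangle_H=\tfrac12\bigl(\chi_V(e)-\chi_V(\sigma^2)\bigr)$ on each basis element of $R(F)$. The two arguments rest on the same underlying fact (characters are constant on conjugacy classes, applied to the class $\{\sigma,\sigma^3\}$), but yours is the more standard character-theoretic packaging and yields slightly more, namely the exact image $\mathbb{Z}[\mathbf{1}]\oplus 2\mathbb{Z}[\lambda]$ of $\mathrm{res}$, whereas the paper's eigenspace count is more elementary and avoids appealing to the full irreducible decomposition. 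One cosmetic slip: the linear characters restrict to exactly $[\mathbf{1}]$, not merely ``a multiple'' of it, but this does not affect the argument.
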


\begin{proof} Since $H$ is the center of $F$ it follows that the conjugation action of $F/H$ on $R(H)$ is trivial, hence $[\lambda] \in R(H)^{F/H}=R(H)$. One easily verifies that the representation
\[   \tau\col F\rightarrow U(2) \]
defined by 
\[\tau(\sigma)=\left(\begin{array}{cc}
 0 & i \\
i & 0
\end{array}\right) \ \ \mbox{and} \ \ \tau(\varepsilon)=\left(\begin{array}{cc}
 -1 & 0 \\
0 & 1
\end{array}\right) \]
satisifies $\mathrm{res}([\tau])=2[\lambda]$. Hence, $k[\lambda]$ is contained in the image of $\mathrm{res}$ for every even $k \in \mathbb{Z}$. Note that, as a free abelian group, $R(H)$ is generated by $[\lambda]$ and the isomorphism class of the $1$-dimensional complex trivial representation $[\mathrm{tr}]$ (e.g.~see \cite{serre}). Now suppose $k$ is odd and there exists an element $[\mu]-[\rho] \in R(F)$ such that $\mathrm{res}([\mu]-[\rho])=k[\lambda]$. There are integers $l,m,n$ and such that $\mathrm{res}([\mu])=l[\mathrm{tr}]+m[\lambda]$, $\mathrm{res}([\rho])=l[\mathrm{tr}]+n[\lambda]$ and $m-n=k$. By changing the representative of $[\mu]$, we may also assume that 

\[   \mu\col F\rightarrow  \mathrm{U}(l+m) \]
where $\mu(\sigma)$ is a diagonal matrix. Since $\mu(\sigma^2)$ has an $m$-dimensional eigenspace with eigenvalues $-1$ and an $l$-dimensional eigenspace with eigenvalue $1$, it follows that $\mu(\sigma)$ has an $s$-dimensional eigenspace with eigenvalue $i$ and a $t$-dimensional eigenspace with eigenvalue $-i$ such that $s+t=m$. Morever, $\mu(\sigma^3)$ has an $s$-dimensional eigenspace with eigenvalue $-i$ and a $t$-dimensional eigenspace with eigenvalue $i$. Since $\sigma$ and $\sigma^3$ are conjugate in $F$, it follows that $s=t$ proving that $m$ is even. A similiar argument shows that $n$ is also even. But this contradicts the fact that  $k=m-n$ is odd. Hence, there does not exist an element $[\mu]-[\rho] \in R(F)$ such that $\mathrm{res}([\mu]-[\rho])=k[\lambda]$, if $k$ is odd.

\end{proof}
The following lemma uses the notation introduced above and will be cited in the next section.
\begin{lemma} \label{lemma: line bundles}  

Every $F$-equivariant complex line bundle over $X^3$ is isomorphic to the pullback of an $F$-equivariant complex line bundle over $E(F/H)$ along the inclusion $i\col X^3 \rightarrow E(F/H)$.

\end{lemma}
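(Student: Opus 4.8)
The plan is to recognise $F$-equivariant line bundles over $X^3$ and over $X$ as sections of a fibration with fibre $K(\mathbb{Z},2)$ over a skeleton of $BG$, and then to extend sections by obstruction theory. Set $G=F/H$; this is a Klein four-group, it acts freely on $X=E(F/H)$ and on every skeleton $X^n$, while $H=\langle\sigma^2\rangle$ acts trivially, so every point of $X^3$ has $F$-stabiliser exactly $H$. Given an $F$-equivariant complex line bundle $L$ over $X^3$, each fibre is a $1$-dimensional representation of $H$; since $X^3$ is connected (indeed $2$-connected, being the $3$-skeleton of the contractible space $X$), the isomorphism class of this isotropy representation is a single character $\chi\in\mathrm{Rep}_{\mathrm{U}(1)}(H)$, either trivial or $[\lambda]$. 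It therefore suffices to show, for each of the two characters $\chi$, that pullback along $i\colon X^3\to X$ maps the $F$-equivariant line bundles over $X$ with isotropy $\chi$ onto those over $X^3$ with isotropy $\chi$.

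First I would apply Lemma~\ref{lemma: uni bun} with $\Gamma=\mathrm{U}(1)$, $\mathcal{S}=\{1,H\}$ and the compatible collection determined by $\chi$, producing a classifying space $B_\chi$ for which isomorphism classes of $(F,\mathrm{U}(1))$-bundles of isotropy $\chi$ over an $F$-CW-complex $Y$ with isotropy in $\mathcal S$ are given by $[Y,B_\chi]_F$, and whose $H$-fixed points satisfy $B_\chi^H\simeq BC_{\mathrm{U}(1)}(\chi)=B\mathrm{U}(1)=K(\mathbb{Z},2)$. Because $H$ is normal in $F$, acts trivially on each $Y\in\{X,X^3\}$, and has quotient $G$, an $F$-map $Y\to B_\chi$ is the same as a $G$-map $Y\to B_\chi^H$; and since $Y$ is a free $G$-CW-complex this is exactly a section of the associated bundle $Y\times_G B_\chi^H\to Y/G$, a fibration with fibre $K(\mathbb{Z},2)$. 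Under these identifications $i^\ast$ becomes restriction of sections along $(BG)^3=X^3/G\hookrightarrow BG=X/G$, so the lemma reduces to the claim that every section over the $3$-skeleton $(BG)^3$ extends to a section over $BG$.

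The hard part is really the bookkeeping of the previous paragraph — the reduction to $H$-fixed points and the identification of both sides as sections — because the extension itself is then formal and is exactly where the dimension $3$ enters. Since $\pi_k(K(\mathbb{Z},2))=0$ for all $k\neq 2$, the obstruction to extending a section from the $k$-skeleton to the $(k+1)$-skeleton lies in $H^{k+1}\big((BG)^{(k+1)},(BG)^{(k)};\pi_k(K(\mathbb{Z},2))\big)$, which vanishes for every $k\geq 3$, and this holds regardless of the possibly nontrivial $G$-action on $\pi_2=\mathbb{Z}$. Hence a section prescribed on $(BG)^3$ extends over all higher skeleta and so over $BG$, giving surjectivity of $i^\ast$ on each isotropy sector and proving the lemma. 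It is worth noting that the $3$-skeleton is sharp here: it is one more than the top nonvanishing homotopy degree of the fibre $B\mathrm{U}(1)$, and the analogous statement for $X^2$ would fail precisely because of the potentially nonzero primary obstruction in $H^3$.
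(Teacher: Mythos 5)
Your proof is correct and follows essentially the same route as the paper's: both reduce, via Lemma~\ref{lemma: uni bun} with $\Gamma=\mathrm{U}(1)$ and $\mathcal{S}=\{1,H\}$, to extending a classifying map from $X^3$ over all of $E(F/H)$, and both conclude by obstruction theory using that the relevant fixed-point space is $B\mathrm{U}(1)=\mathbb{C}P^{\infty}$, whose homotopy groups vanish in degrees other than $2$. The only (cosmetic) difference is that you first divide out the trivially acting normal subgroup $H$ and run classical obstruction theory for sections of a $K(\mathbb{Z},2)$-fibration over $BG$, where the paper applies Bredon's equivariant obstruction theory directly to the relative groups $\mathrm{H}^{n+1}_{F}(E(F/H),X^3;\pi_{n}(B_{\mathcal{S}}(F,A)^{-}))$.
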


\begin{proof} 
Let $\mathcal{S}$ be the family of subgroups of $F$ containing only
$H$ and the trivial subgroup. Note that isomorphism classes of
$F$-equivariant complex line bundles correspond to isomorphism classes
of $(F,S^1=\mathrm{U}(1))$-bundles. Let $\pi\col E \rightarrow X^3$ be an
$F$-equivariant complex line bundle over and let $[\alpha_H\col H
  \rightarrow \mathrm{U}(1)=S^{1}]$ be the isomorphism class in
$\mathrm{Rep}_{S^{1}}(H)$ of the $H$-representation induced on the
fibers of $\pi$. If we set $\alpha_{\{e\}} \col \{e\} \rightarrow S^{1}$,
then $A=([\alpha_K])_{K \in \mathcal{S}} \in \lim_{K \in
  \mathcal{S}}\mathrm{Rep}_{S^{1}}(K)$. It follows from Lemma
\ref{lemma: uni bun} for $\Gamma=S^{1}$, that in order to show that
$\pi$ is the pullback of an $F$-equivariant complex line bundle over
$E(F/H)$ along the inclusion $i\col X^3 \rightarrow E(F/H)$, it suffices
to show that every $F$-map from $X^3$ to $B_{\mathcal{S}}(F,A)$ can be
extended to an $F$-map from $E(F/H)$ to $B_{\mathcal{S}}(F,A)$. Here
$B_{\mathcal{S}}(F,A)$ is homotopy equivalent to
$BS^{1}=\mathbb{C}P^{\infty}$ for all $K \in \mathcal{S}$, again by
Lemma \ref{lemma: uni bun}. It follows from Bredon's equivariant
obstruction theory (see \cite[Section
  II.1]{Bredon},\cite[Th. I.5.1]{May}) that the potential obstructions
for extending such a map lie in the relative Bredon cohomology groups
$\mathrm{H}^{n+1}_{F}(E(F/H),X^3;\pi_{n}(\mathrm{B}_{\mathcal{S}}(F,A)^{-}))$
for $n\geq 3$. Since $\pi_n(\mathbb{C}P^{\infty})$ is zero unless
$n=2$, the lemma is proven.
\end{proof}

The idea for the following lemma is contained in \cite[p 596]{LuckOliver}.
\begin{lemma} \label{lemma: no virtual bundle} There exists an $n\geq 1$ such that $[\lambda]$ is not contained in the image of the edge homomorphism
\[    \mathrm{K}^{0}_F(X^n) \rightarrow R(H)^{F/H}. \]
\end{lemma}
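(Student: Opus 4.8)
The plan is to run the Atiyah--Hirzebruch spectral sequence (\ref{eq: ss complex}) for the finite $F$-CW-complex $X^n$ and to use that the image of the edge homomorphism $\varepsilon\col \mathrm{K}^0_F(X^n)\to R(H)^{F/H}$ is exactly the group $E_\infty^{0,0}$ of permanent cycles; thus $[\lambda]$ is in the image of $\varepsilon$ if and only if $[\lambda]$ survives every differential, and it suffices to produce a single finite $n$ for which it does not. First I would identify the $E_2$-page. Since $F$ acts on $X=E(F/H)$ through the free quotient $F/H$, every cell has isotropy exactly the central subgroup $H$, and the Weyl group $F/H$ acts trivially on $R(H)$; hence the Bredon cohomology collapses to ordinary group cohomology, $E_2^{p,q}=\mathrm{H}^p(F/H;R(H))$ for $q$ even and $0$ for $q$ odd, where for $X^n$ one uses the $n$-skeleton of $B(F/H)=B(C_2\times C_2)$.

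As we are in complex K-theory $d_2=0$, and the first possibly non-zero differential out of the corner is \[ d_3\col E_3^{0,0}=R(H)\longrightarrow E_3^{3,-2}=\mathrm{H}^3(C_2\times C_2;R(H)). \] A Künneth computation gives $\mathrm{H}^3(C_2\times C_2;\mathbb{Z})\cong\mathbb{Z}/2$, so the target is $(\mathbb{Z}/2)^2$, and this group is already correctly computed on the $4$-skeleton; I would therefore take $n=4$, for which $E_\infty^{0,0}=\ker d_3$. Two elements are visibly permanent cycles: the class $[\mathrm{tr}]$, realized by the trivial line bundle, and, by Lemma \ref{lemma: image res}, the class $2[\lambda]=\mathrm{res}([\tau])$, realized by the constant bundle $X^n\times\tau$. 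Hence $d_3([\mathrm{tr}])=0$ and $2\,d_3([\lambda])=0$, so $d_3$ factors through $R(H)/\langle[\mathrm{tr}],2[\lambda]\rangle\cong\mathbb{Z}/2$ generated by $[\lambda]$, and $d_3([\lambda])$ is an element of order dividing $2$.

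The heart of the matter, and the step I expect to be the main obstacle, is to show $d_3([\lambda])\neq 0$, equivalently that $[\lambda]$ is genuinely not realizable even virtually. The mechanism is the non-split central extension $1\to H\to F\to C_2\times C_2\to 1$: pushing its class forward along $\lambda\col H\to S^1$ yields a class $w\in \mathrm{H}^2(C_2\times C_2;S^1)\cong\mathrm{H}^3(C_2\times C_2;\mathbb{Z})\cong\mathbb{Z}/2$, and I would identify $d_3$ on $E^{0,0}$ with cup product by $w$ (the $\lambda$-isotypic summand of $\mathrm{K}^0_F(X)$ being a $w$-twisted form of $\mathrm{K}^0(B(C_2\times C_2))$, whose Atiyah--Hirzebruch $d_3$ is cup product with the twist). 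The non-vanishing of $w$ is precisely the phenomenon recorded by Lemma \ref{lemma: image res}: because $\sigma$ and $\varepsilon$ fail to commute, with commutator the central generator, the integral Bockstein of the extension cocycle is non-zero, so $\lambda$ admits no one-dimensional $F$-extension and the only irreducible of $F$ restricting into the $\lambda$-isotypic part is the two-dimensional $\tau$. Granting $w\neq 0$ we get $d_3([\lambda])=w\neq 0$, so $[\lambda]$ does not survive and the lemma holds with $n=4$. As a fallback I would keep in reserve a completion-theoretic argument: survival of $[\lambda]$ for all $n$ would realize $[\lambda]$ as a virtual bundle over $E(F/H)=E_{\mathcal S}F$ for the family $\mathcal S=\{1,H\}$ (the $\mathcal S$ of Lemma \ref{lemma: line bundles}), and an Atiyah--Segal-type completion theorem would then force the image of $\varepsilon$ into $\mathrm{im}(\mathrm{res})=\mathbb{Z}[\mathrm{tr}]\oplus 2\mathbb{Z}[\lambda]$, again contradicting Lemma \ref{lemma: image res}.
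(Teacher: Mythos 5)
Your overall strategy --- compute $E_\infty^{0,0}$ for the Atiyah--Hirzebruch spectral sequence of $X^4$ and show $[\lambda]$ is killed by $d_3$ --- is a genuinely different route from the paper's, and your bookkeeping up to the crux is sound: the $E_2$-page is $\mathrm{H}^p(F/H;R(H))$ with trivial action, for a $4$-dimensional complex $E_\infty^{0,0}=\ker d_3$, the classes $[\mathrm{tr}]$ and $2[\lambda]=\mathrm{res}[\tau]$ are permanent cycles because they are realized by honest bundles, and so everything reduces to $d_3([\lambda])\neq 0$. But that reduction is exactly where the proof stops being a proof. The identification of the $\lambda$-isotypic summand of $\mathrm{K}^0_F(X)$ with a $w$-twisted form of $\mathrm{K}^0(B(C_2\times C_2))$, and the identification of $d_3$ there with cup product by the twist $w=\beta(\lambda_*[e])$, are each nontrivial theorems (the latter is Atiyah--Segal's computation of the first differential in twisted K-theory); you assert them in a parenthesis rather than establish them or cite them precisely, and the non-vanishing $d_3([\lambda])=w\neq 0$ is the entire content of the lemma. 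As written, the ``heart of the matter'' is a plausible mechanism, not an argument.

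The paper's proof is much more economical and is essentially your fallback, done correctly: by Jackowski's completion theorem for the family $\{1,H\}$ (with $E\mathcal{F}=E(F/H)$), the inverse systems $\{R(F)/I^n\}$ and $\{\mathrm{K}^0_F(X^n)\}$ are pro-isomorphic, where $I=\ker(R(F)\to R(H))$; this yields, for $n$ large, a factorization of $\varepsilon_F\colon \mathrm{K}^0_F(X^n)\to R(H)^{F/H}$ through $R(F)/I$, so the image of the edge homomorphism equals the image of $\mathrm{res}\colon R(F)\to R(H)$, and Lemma \ref{lemma: image res} finishes. Note that your fallback as stated has its own gap: knowing that $[\lambda]$ survives in the spectral sequence for every finite skeleton does not by itself produce a virtual bundle over $E(F/H)$ restricting to $[\lambda]$ (the preimages form an inverse system of nonempty cosets whose limit can be empty --- a $\varprojlim^1$/Mittag--Leffler issue). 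The pro-isomorphism formulation avoids passing to the limit entirely, which is why the paper phrases the conclusion as ``for sufficiently large $n$'' rather than pinning down $n=4$ as your (incomplete) differential computation would.
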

\begin{proof}
By \cite[Theorem 5.1]{Jackowski} for $X=\{\ast\}$, $\mathcal{F}=\{e,H\}$ and $E\mathcal{F}=E(F/H)$, there are maps
\[   \alpha_{n}\col   R(F)/I^n  \rightarrow  \mathrm{K}^0_F(X^n)    \] 
that induce a map of inverse systems from $\{ R(F)/I^n \}_{n\geq 0} $
to $ \{ \mathrm{K}^0_F(X^n)\}_{n\geq 0}$ that induces an isomorphism
of pro-rings. Here $I$ is the kernel of the restriction map $R(F)
\rightarrow R(H)$. This implies that for sufficiently large $n\geq 1$
there exists a map $\beta_{1}\col\mathrm{K}^0_F(X^n) \rightarrow R(F)/I$
making the following diagram commute
\[\xymatrix{R(F)/I^n \ar[r]^{\alpha_n} \ar[dd]  & 
\mathrm{K}^0_F(X^n)  \ar[ddl]^{\beta_{1}} \ar[dd] \ar[dr]^{\varepsilon_F} &\\
& &  R(H)^{F/H} \\
R(F)/I \ar[r]^{\alpha_1} &  \mathrm{K}^0_F(X^1). \ar[ur]^{\varepsilon_F} }\]
This shows that the image of the restriction map 
\[      R(F) \rightarrow R(H)^{F/H}   \]
coincides with the image of the edge homomorphism 
\[    \mathrm{K}^{0}_F(X^n) \rightarrow R(H)^{F/H}. \]
Since $[\lambda]$ does not lie in the image of 
$\mathrm{R}(F) \rightarrow \mathrm{R}(H)^{F/H}$ by 
Lemma \ref{lemma: image res}, the lemma follows.
\end{proof}
Let $n\geq 3$. By \cite[Th.~A \& Th.~8.3]{Leary} there exists a
compact $n$-dimensional locally CAT(0)-cubical complex $T_{X^n}$
equipped with a free cellular $F/H$-action and an $F/H$-equivariant
map $t_{X^n}\col T_{X^n} \rightarrow X^{n}$ that induces an isomorphism
\begin{equation}\label{eq: equivcohom1}  \mathcal{H}^{\ast}_F(X^{n})
  \xrightarrow{\cong} \mathcal{H}^{\ast}_{F}(T_{X^n}) \end{equation}
for any equivariant cohomology theory $\mathcal{H}^{\ast}_{?}(\cdot) $
(e.g. see \cite[section 1]{Luck3}).  (We remark that
\cite[Th.~8.3]{Leary} is stated for equivariant \emph{homology}
theories, but the analogous statement holds for equivariant 
\emph{cohomology} theories by essentially the same proof.)  
The action of $F$ on $T_{X^n}$ in the above is via the
projection $F \rightarrow F/H$.  Now let $Y^n$ be the universal cover
of $T_{X^n}$ and let $\Gamma_n$ be the group of self-homeomorphisms of 
$Y^n$ that lift the action of $F/H$ on $T_{X^n}$. Since $F/H$ acts
freely on $T_{X^n}$, $\Gamma_n$ acts freely on $Y^n$. We conclude that $Y^n$ is an $n$-dimensional CAT(0)-cubical complex on which $\Gamma_n$ acts freely, cocompactly and
cellularly. Since $Y_n$ is contractible, this implies that $\Gamma_n$ is torsion-free. By construction there is a surjection $\Gamma_n\rightarrow
F/H $ whose kernel $N_n$ is the torsion-free group of deck
transformation of the covering $Y^n \rightarrow T_{X^n}$. Now define
the group $G_n$ to be the pullback of $\pi_n\col \Gamma_n \rightarrow
F/H$ along $F \rightarrow F/H$. Then $G_n$ acts on $Y^n$ via the
quotient map $G_n \rightarrow G_n/H=\Gamma_n$ and fits into the short
exact sequence
\[  1 \rightarrow N_n \rightarrow G_n \xrightarrow{p_n} F \rightarrow 1.  \]
Note that the only non-trivial finite subgroup of $G_n$ is $H\cong
C_2$ and that since $N_n$ acts freely on $Y^n$, the $G_n$-equivariant
quotient map $Y^n \rightarrow N_n \setminus Y^n= T_{X^n}$ induces an
isomorphism (\cite[Lemma 3.5]{LuckOliver})
\begin{equation}  \label{eq: equivcohom2}\mathrm{K}^{\ast}_F(T_{X^n})
  \xrightarrow{\cong} \mathrm{K}^{\ast}_{G_n}(Y^n).  \end{equation}
Applying (\ref{eq: equivcohom1}) and (\ref{eq: equivcohom2}) to the
compostion $Y^n \rightarrow T_{X^n} \rightarrow X^n$ and the
equivariant cohomology theories $\mathrm{K}^{\ast}_{?}(\cdot)$ and
$\mathrm{H}^{\ast}_{?}(\cdot,R(-))$ with $\ast=0$, we obtain a
commutative diagram
\[   \xymatrix{\mathrm{K}^0_F(X^n) \ar[r]^{\cong}
  \ar[d]^{\varepsilon_F} & \mathrm{K}^{0}_{G_n}(Y^n)
  \ar[d]^{\varepsilon_{G_n}} \\ 
R(H)^{F/H}\ar[r]^{\cong \ \ \ \ \ \ \ \ } & \lim_{G_n/S \in
  \mathcal{O}_{\mathcal{F}}G_n} R(S).}\] 
The fact that this diagram commutes can be seen as follows. Using equivariant cellular approximation, we may assume that the map $X^n \rightarrow  Y^n$ is cellular. By considering the inclusion of zero-skeleta in $n$-skeleta, naturality yields a commutative diagram 
\[   \xymatrix{\mathrm{K}^0_F(X^n) \ar[r]^{\cong}
	\ar[d] & \mathrm{K}^{0}_{G_n}(Y^n)
	\ar[d] \\ 
	\mathrm{K}^0_F(X^0)\ar[r] & \mathrm{K}^0_{G_n}(Y^0).}\] 
The edge homomorphism $\varepsilon_F\col \mathrm{K}^0_F(X^n) \rightarrow R(H)^{F/H} \subseteq \mathrm{K}^0_F(X^0) $ coincides by construction with $\mathrm{K}^0_F(X^n) \rightarrow  \mathrm{K}^0_F(X^0)$ once we restrict the codomain, and similarly for $\varepsilon_{G_n}$. Therefore, commutativity follows.\\

Since we proved in Lemma \ref{lemma: no virtual bundle} that, for $n$
large enough, the isomorphism class of $\lambda$ does not lie in the
image of the edge homomorphism
\[    \mathrm{K}^{0}_F(X^n) \rightarrow R(H)^{F/H} \]
it follows from the commutative diagram above that 
the compatible system of representations \[ (\lambda \circ
p_{n|S})_{S \in \mathcal{F}}\in \lim_{G_n/S \in
	\mathcal{O}_{\mathcal{F}}G_n}R(S)=\mathrm{H}^{0}_{\mathcal{F}}(G_n,R(-)).\]  
does not lie in the image of the edge homomorphism
\[  \varepsilon_{G_n}\col \mathrm{K}^{0}_{G_n}(Y^n) \rightarrow \lim_{G_n/S \in
  \mathcal{O}_{\mathcal{F}}G_n} R(S). \] 

Recall from \cite{BridHaef} that non-empty CAT(0)-cube complexes are
contractible and that the fixed point set for a finite group action on 
a CAT(0)-cube complex is contractible.  Since $G_n$ acts cellularly 
properly and cocompactly on the CAT(0)-cube complex $Y_n$, we deduce
that $Y_n$ is a cocompact model for $\underline{E}G_n$.  To summarize, we
have constructed a group $G=G_n$ with a cocompact classifying space
for proper actions $\underline{E}G$ admitting a compatible collection
of complex representations of the finite subgroups of $G$ that cannot
be realized as $G$-equivariant virtual complex vector bundle over
$\underline{E}G$.

We remark that Wolfgang L\"uck has shown us another quite different way 
to find a finite group $F$ and an $F$-CW-complex $X$ that satisfy 
Lemma~\ref{lemma: no virtual bundle}; any such pair could be used 
to construct a group with similar properties to the group $G=G_n$.

\section{Real vector bundles} \label{sec: real}
One could apply the techniques of the previous section in the real
setting to obtain a group $G$ with cocompact classifying space for
proper actions $\underline{E}G$ so that the edge homomorphism
\[  \varepsilon_{G}\col \mathrm{KO}^{0}_{G}(\underline{E}G) 
\rightarrow \lim_{G/H
  \in \mathcal{O}_{\mathcal{F}}G} RO(H) \] is not surjective. Here one
would need the real version of \cite[Theorem 5.1]{Jackowski}, which
also holds as explained in the paragraph below \cite[Theorem
  5.1]{Jackowski}.

 Instead we give an explicit description of a 
group $G$ that admits $\mathbb{R}^2$ as a cocompact model for 
$\underline{E}G$ and admits a compatible collection of real 
representations of its finite subgroups that cannot be realized 
as a real $G$-vector bundle over $\mathbb{R}^2$.  

We start by describing a related group $\Gamma$ that is a 
2-dimensional crystallographic group, or wallpaper group; this 
group is known as $p2gg$, but we will describe it explicitly.  
Endow $\mathbb{R}^2$ with the CW-structure coming from the 
standard tesselation by unit squares with vertices at $\mathbb{Z}^2$, 
and let $\Gamma$ be the group of automorphisms of this CW-structure
that preserves the pattern shown in Figure~\ref{figone}.  The stabilizer
of a 2-cell is clearly trivial, and so the 2-cells form a single 
free $\Gamma$-orbit.  There are two orbits of 1-cells, the vertical 
and horizontal edges, and again each orbit is free.  There are two 
orbits of 0-cells, and the stabilizer of a 0-cell is cyclic of 
order two, generated by the rotation of order two fixing the point.  
Since the stabilizer of each cell acts trivially on that cell, the 
given CW-structure makes $\mathbb{R}^2$ into a $\Gamma$-CW-complex.  

The translation subgroup $T$ of $\Gamma$ has index four, and consists 
of the elements $(x,y)\mapsto (x+2m,y+2n)$.  The orientation-preserving
subgroup $N$ of $\Gamma$ has index two, and consists of $T$ together
with the rotations through $\pi$ about some point of $\mathbb{Z}^2$, 
which are of the form $(x,y)\mapsto (2m-x,2n-y)$.  Finally the 
elements of $\Gamma - N$ are the glide reflections whose axes bisect
the sides of the 2-cells: $(x,y)\mapsto (2m+1-x,2n+1+y)$ and 
$(x,y)\mapsto (2m+1+x,2n+1-y)$.  The quotients $T\backslash \mathbb{R}^2$, 
$N\backslash \mathbb{R}^2$ and $\Gamma\backslash \mathbb{R}^2$ are 
respectively a torus consisting of four squares, an $S^2$ obtained by 
identifying the boundaries of two squares, and a copy of $\mathbb{R}P^2$ 
obtained by identifying the edges of a square in pairs.  The fact 
that $\Gamma -N$ contains no torsion elements is reflected in the 
fact that $\Gamma/N$ acts freely on the sphere $N\backslash \mathbb{R}^2$.  

Now let $F$ be a copy of $C_4$ and let $H\cong C_2$ be the index two 
subgroup of $F$.  The group $G$ is defined as the pullback of the two 
maps $\Gamma\rightarrow \Gamma/N\cong C_2$ and $F\rightarrow F/H\cong C_2$.  
By construction the group $G$ admits $\mathbb{R}^2$ as a cocompact 
model for $\underline{E}G$, and fits into a short exact sequence 
\[1 \rightarrow N \rightarrow G \xrightarrow{p} F \rightarrow 1\]
such that every finite subgroup of $G$ maps onto a subgroup of $H$ under $p$.\\

\begin{figure}
\begin{center}
\includegraphics[scale =1.1]{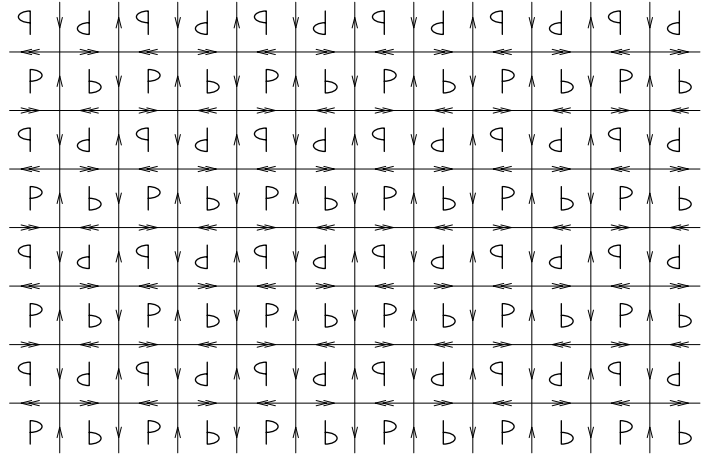}
\end{center}
\caption{\label{figone}A wallpaper pattern for $\Gamma=p2gg$} 
\end{figure}

Now let
\[\lambda: H \rightarrow O(1,\mathbb{R})=C_2\]
be the $1$-dimensional real sign representation of $H$, i.e.~$\lambda$
is the identity map. The isomorphism class $[\lambda]$ is clearly
contained in $RO(H)^{F/H}$, since $F$ is abelian.

\begin{lemma}\label{not in image} The isomorphism class $k[\lambda]$
  is contained in the image of the restriction map \[RO(F)\rightarrow
  RO(H)^{F/H}.\]  
if and only if $k$ is even. 
\end{lemma}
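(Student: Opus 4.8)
The plan is to analyze the image of $\mathrm{res}\col RO(F) \to RO(H)$ directly, using the fact that $F \cong C_4$ is cyclic, so its real representation ring is completely understood. First I would record the real irreducible representations of $H \cong C_2$: the trivial representation $[\mathrm{tr}]$ and the sign representation $[\lambda]$, which freely generate $RO(H)$ as an abelian group. The easy direction is to exhibit an explicit real $F$-representation whose restriction to $H$ is $2[\lambda]$. Since $F = \langle \sigma \rangle \cong C_4$ with $H = \langle \sigma^2 \rangle$, I would take the $2$-dimensional real representation $\rho$ in which $\sigma$ acts by rotation through $\pi/2$; then $\sigma^2$ acts by rotation through $\pi$, i.e.\ by $-\mathrm{Id}$ on $\mathbb{R}^2$, so $\mathrm{res}([\rho]) = 2[\lambda]$. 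This shows every even multiple $k[\lambda]$ lies in the image.

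The harder direction is to show that odd multiples are \emph{not} in the image. The cleanest approach mirrors the proof of Lemma~\ref{lemma: image res}: I would classify the real irreducibles of $F \cong C_4$ and compute how each restricts to $H$. The group $C_4$ has two one-dimensional real representations (trivial, and the sign representation factoring through $F/H$, on which $\sigma$ acts by $-1$) and one two-dimensional real irreducible $\rho$ (rotation by $\pi/2$). Restricting to $H = \langle\sigma^2\rangle$: both one-dimensional representations send $\sigma^2$ to $+1$, hence restrict to $[\mathrm{tr}]$; and $\rho$ restricts to $2[\lambda]$ as computed above. Therefore the image of $\mathrm{res}\col RO(F)\to RO(H)$ is precisely the subgroup generated by $[\mathrm{tr}]$ and $2[\lambda]$. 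In particular the coefficient of $[\lambda]$ appearing in any restricted virtual representation is always even, so $k[\lambda]$ lies in the image if and only if $k$ is even.

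I expect the main subtlety to be making the classification of real irreducibles of $C_4$ rigorous rather than just asserting it. The point requiring care is that over $\mathbb{R}$ the character $\sigma \mapsto i$ and its conjugate $\sigma \mapsto -i$ combine into a single $2$-dimensional \emph{real} irreducible (the Schur index/complexification bookkeeping), which is exactly why the coefficient of $[\lambda]$ is forced to be even: there is no real representation of $F$ detecting $\sigma^2 = -1$ in a one-dimensional real eigenspace. I would phrase this either via the explicit rotation matrices or by the standard description of $RO(C_n)$; both routes are short. An alternative that avoids classification entirely would be to complexify: note $[\lambda]\otimes_{\mathbb{R}}\mathbb{C}$ is the complex representation also called $[\lambda]$ in Lemma~\ref{lemma: image res}, and use that $\mathrm{res}^F_H$ is compatible with complexification together with Lemma~\ref{lemma: image res} to rule out odd multiples; I would likely mention this as a cross-check.
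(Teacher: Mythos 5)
Your proposal is correct and follows essentially the same route as the paper: classify the three real irreducibles of $F\cong C_4$ (trivial, sign factoring through $F/H$, and the $2$-dimensional rotation representation), observe that the first two restrict to $[\mathrm{tr}]$ and the third to $2[\lambda]$, and conclude that the image of $RO(F)\rightarrow RO(H)^{F/H}$ is exactly $\{2n[\lambda]+m[\mathrm{tr}]\}$. The complexification cross-check you mention is a nice extra but is not needed and is not used in the paper.
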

\begin{proof}

Recall that the irreducible real representations of $C_4$ are up to
isomorphism the one-dimensional trivial representation representation,
the one dimensional sign representation of $F/H=C_2$ and one 
2-dimensional faithful representation in which the elements of order 
four act as rotations by $\pm \frac{\pi}{2}$. The restriction of the
first two of the representations to $H$ gives the trivial one
dimensional representation of $H$, while the restriction to $H$ of the
third is $\lambda\oplus \lambda$. We therefore conclude that the
image of $RO(F)\rightarrow RO(H)^{F/H}$ consists of element of  the
form $2n[\lambda]+m[\mathrm{tr}]$, where $\mathrm{tr}$ is the trivial
one dimensional representation of $H$ and $n,m \in \mathbb{Z}$.  This
shows that $k[\lambda]$ is contained in the image of the restriction
map $RO(F)\rightarrow RO(H)^{F/H}$ if and only if $k$ is even. 
\end{proof}

\begin{lemma} \label{lemma: real line bundles}  Let $F$ act on the
  infinite dimensional sphere $S^{\infty}$ by first projecting onto
  $F/H=C_2$ and then acting via the antipodal map.  View $S^2$ as the
  $2$-skeleton of $S^{\infty}$. Every $F$-equivariant orthogonal real
  line bundle over $S^2$ is isomorphic to the pullback of an $F$-equivariant
  orthogonal real line bundle over $S^{\infty}$ along the inclusion $S^2
  \rightarrow S^{\infty}$. 

\end{lemma}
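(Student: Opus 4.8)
The plan is to mimic the structure of the proof of Lemma~\ref{lemma: line bundles} (the complex analogue for $X^3\hookrightarrow E(F/H)$), replacing the complex classifying data with its real orthogonal counterpart. The essential observation is that $F$-equivariant orthogonal real line bundles over an $F$-CW-complex $Z$ are the same thing as $(F,\mathrm{O}(1,\mathbb{R}))$-bundles over $Z$, where $\mathrm{O}(1,\mathbb{R})\cong C_2$. Thus, setting $\Gamma=\mathrm{O}(1,\mathbb{R})$, I would invoke Lemma~\ref{lemma: uni bun} to reduce the extension problem for line bundles to an extension problem for $F$-maps into a universal base space $B_{\mathcal{S}}(F,A)$.

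First I would fix an $F$-equivariant orthogonal real line bundle $\pi\col E\rightarrow S^2$, read off the induced $H$-representation on a fiber to obtain an isomorphism class $[\alpha_H]\in\mathrm{Rep}_{\mathrm{O}(1)}(H)$, set $\alpha_{\{e\}}$ to be the trivial map, and assemble these into an $\mathcal{S}$-compatible collection $A=([\alpha_K])_{K\in\mathcal{S}}$, where $\mathcal{S}=\{\{e\},H\}$ is the family of subgroups of $F$ consisting of $H$ and the trivial group. By Lemma~\ref{lemma: uni bun}, showing that $\pi$ is a pullback of an $F$-equivariant orthogonal line bundle over $S^\infty$ along the inclusion $S^2\hookrightarrow S^\infty$ amounts to showing that every $F$-map $S^2\rightarrow B_{\mathcal{S}}(F,A)$ extends to an $F$-map $S^\infty\rightarrow B_{\mathcal{S}}(F,A)$.

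Next I would identify the relevant fixed-point sets of the universal base. By the third bullet of Lemma~\ref{lemma: uni bun}, each fixed-point set $B_{\mathcal{S}}(F,A)^K$ is homotopy equivalent to $BC_{\mathrm{O}(1)}(\alpha_K)$; since $\mathrm{O}(1,\mathbb{R})\cong C_2$ is abelian, every centralizer is all of $C_2$, so each fixed-point set is homotopy equivalent to $BC_2\simeq \mathbb{R}P^\infty=S^\infty/C_2$. The crucial homotopy-theoretic input is therefore that $\pi_n(\mathbb{R}P^\infty)=0$ for all $n\geq 2$ (only $\pi_1=C_2$ is nontrivial). Then I would apply Bredon equivariant obstruction theory (as in \cite[Section~II.1]{Bredon}, \cite[Th.~I.5.1]{May}): the obstructions to extending the $F$-map cell by cell over the higher skeleta $S^\infty$ relative to $S^2$ lie in the relative Bredon cohomology groups $\mathrm{H}^{n+1}_F(S^\infty,S^2;\pi_n(B_{\mathcal{S}}(F,A)^{-}))$ for $n\geq 2$, and these coefficient systems vanish because $\pi_n(\mathbb{R}P^\infty)=0$ for $n\geq 2$. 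Hence all obstruction groups vanish and the extension exists.

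The one genuine subtlety, and the step I expect to require the most care, is the indexing range of the obstructions: whereas the complex case used $\pi_n(\mathbb{C}P^\infty)=0$ for $n\neq 2$ starting the obstructions at $n\geq 3$ (matching $X^3\hookrightarrow E(F/H)$), here the nontrivial homotopy of $\mathbb{R}P^\infty$ sits in degree $1$, so obstructions begin at $n\geq 2$, matching the inclusion $S^2\hookrightarrow S^\infty$ of the $2$-skeleton. I would verify that $S^2$ really is the $2$-skeleton of $S^\infty$ for the chosen $F$-CW structure (antipodal action through $F/H=C_2$), so that the relative cells added all have dimension $\geq 3$ and the obstruction groups indeed involve only $\pi_n$ for $n\geq 2$; with $\pi_n(\mathbb{R}P^\infty)=0$ in that range the argument closes exactly as in Lemma~\ref{lemma: line bundles}.
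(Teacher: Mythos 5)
Your proof is correct and takes essentially the same approach as the paper's: both identify $F$-equivariant orthogonal real line bundles with $(F,C_2)$-bundles, use Lemma \ref{lemma: uni bun} to reduce to extending an $F$-map $S^2\rightarrow B_{\mathcal{S}}(F,A)$ over $S^{\infty}$, identify the fixed-point sets with $BC_2\simeq\mathbb{R}P^{\infty}$, and observe that the obstructions in $\mathrm{H}^{n+1}_{F}(S^{\infty},S^2;\pi_{n}(B_{\mathcal{S}}(F,A)^{-}))$ for $n\geq 2$ vanish because $\pi_n(\mathbb{R}P^{\infty})=0$ for $n\geq 2$. Your extra care about the indexing range of the obstructions is exactly the right point to check and matches the paper's argument.
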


\begin{proof} Let $\mathcal{S}$ be the family of subgroups of $F$
  containing $H$ and the trivial subgroup. Note that isomorphism classes of $F$-equivariant
  orthogonal real line bundles correspond to isomorphism classes of $(F,C_2)$-bundles. Now let $\xi$ be an $(F,C_2)$-bundle
  over $S^2$ with fibers $A=(\xi_S)\in \lim_{S \in
    \mathcal{S}}\mathrm{Rep}_{C_2}(S)$. 
 By Lemma \ref{lemma: uni bun}, it suffices to show that every $F$-map $f\col S^2
\rightarrow \mathrm{B}_{\mathcal{S}}(F,A)$ can be extended to an
$F$-map $\tilde{f}\col S^{\infty} \rightarrow
\mathrm{B}_{\mathcal{S}}(F,A)$. Again by Lemma \ref{lemma: uni bun}, $\mathrm{B}_{\mathcal{S}}(F,A)^S \cong
BC_2=\mathbb{\mathbb{R}}P^{\infty}$ for all $S \in \mathcal{S}$.  It follows from Bredon's equivariant obstruction theory (see \cite[Section II.1]{Bredon},\cite[Th. I.5.1]{May}) that the potential obstructions for extending such a map lie in the relative Bredon cohomology groups
$\mathrm{H}^{n+1}_{F}(S^{\infty},S^2;\pi_{n}(\mathrm{B}_{\mathcal{S}}(F,A)^{-}))$
for $n\geq 2$. Since $\pi_n(\mathbb{R}P^{\infty})$ is zero unless
$n=1$, the lemma is proven.  
\end{proof}

\begin{lemma}\label{lemma: no real bundle} Let $F$ act on $S^2$ by
  first projecting onto $F/H=C_2$ and then acting via the antipodal
  map. There does not exist a real $F$-vector bundle $\xi\col E
  \rightarrow S^2$ such that the representation of $H$ on the fibers
  of $\xi$ is isomorphic to $\lambda$. 
\end{lemma}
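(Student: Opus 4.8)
The plan is to argue by contradiction, reducing the whole statement to an elementary sign obstruction on a single scalar function. Since $\lambda$ is one-dimensional, any real $F$-vector bundle whose fiber $H$-representation is isomorphic to $\lambda$ is automatically a \emph{line} bundle, so I would assume for contradiction that there is an $F$-equivariant real line bundle $\xi\col E\rightarrow S^2$ on which $H=\langle\sigma^2\rangle$ acts fiberwise by $-1$. Here $\sigma$ denotes a generator of $F=C_4$; it projects to the nontrivial element of $F/H=C_2$, so by hypothesis it acts on the base $S^2$ as the antipodal map $a$, and by Definition~\ref{def: vector bundle} it acts on $E$ through a bundle isomorphism covering $a$, while $\sigma^2$ acts trivially on $S^2$ and hence fiberwise.

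First I would trivialize $E$ non-equivariantly. Because $H^1(S^2;\mathbb{Z}/2)=0$, every real line bundle over $S^2$ is trivial, so I fix an isomorphism $E\cong S^2\times\mathbb{R}$; note that this trivialization need not respect the $F$-action. In these coordinates the action of $\sigma$ must take the form $\sigma\cdot(x,v)=(a(x),f(x)v)$ for some continuous nowhere-vanishing function $f\col S^2\rightarrow\mathbb{R}^{\times}$, since $\sigma$ acts linearly and invertibly on each one-dimensional fiber. Iterating once gives $\sigma^2\cdot(x,v)=(x,f(a(x))f(x)v)$, and since $\sigma^2$ generates $H$ and acts by $\lambda$, that is by $-1$ on fibers, this forces the identity $f(a(x))f(x)=-1$ for every $x\in S^2$.

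The contradiction is then immediate: $S^2$ is connected and $\mathbb{R}^{\times}$ has two components, so $f$ has constant sign and $f(a(x))f(x)>0$ for all $x$, which is incompatible with $f(a(x))f(x)=-1$. Hence no such bundle exists. I would remark that this sign obstruction is precisely the geometric shadow of Lemma~\ref{not in image}, according to which $k[\lambda]$ lies in the image of $\mathrm{res}\col RO(F)\rightarrow RO(H)^{F/H}$ if and only if $k$ is even; the fiber representation $[\lambda]$ is the odd multiple $k=1$, which is exactly why the scalar equation $f(a(x))f(x)=-1$ cannot be solved over a connected base.

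I do not expect any genuinely hard step here; the single point that must be handled with care is the reduction to one scalar function, which rests on the bundle being one-dimensional and globally trivial. If one prefers to phrase the argument inside the framework used elsewhere in this section, Lemma~\ref{lemma: real line bundles} allows one to replace $S^2$ by $S^{\infty}$ throughout, and the identical computation applies verbatim because $H^1(S^{\infty};\mathbb{Z}/2)=0$ as well, so that $F$-line bundles over $S^{\infty}$ realize only those fiber representations that restrict from $RO(F)$. Either way the obstacle is bookkeeping rather than substance.
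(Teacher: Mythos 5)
Your proof is correct, and it takes a genuinely different and considerably more elementary route than the paper. The paper first extends the hypothetical line bundle from $S^2$ to $S^{\infty}$ via Lemma~\ref{lemma: real line bundles} (equivariant obstruction theory), restricts back to $S^n$ for large $n$, and then invokes the real version of Jackowski's completion theorem to identify the image of the edge homomorphism $\mathrm{KO}^0_F(S^n)\rightarrow RO(H)^{F/H}$ with the image of the restriction map $RO(F)\rightarrow RO(H)^{F/H}$, so that the contradiction comes from Lemma~\ref{not in image}. Your argument bypasses all of that machinery: the bundle is forced to be a line bundle, it is non-equivariantly trivial since $H^1(S^2;\mathbb{Z}/2)=0$, the generator $\sigma$ then acts by $(x,v)\mapsto (a(x),f(x)v)$ for a continuous nowhere-zero $f$, and the resulting identity $f(a(x))f(x)=-1$ is incompatible with the connectedness of $S^2$. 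Each of these steps checks out (in particular $f$ is continuous and fiberwise multiplication by a nonzero scalar because each $g\in F$ acts by a bundle isomorphism, and $\sigma^2$ must act by exactly $-1$ since the fibre representation is isomorphic to $\lambda$). What the paper's heavier route buys is generality and uniformity: it shows that the fibre representations of arbitrary real $F$-vector bundles over $S^n$, for $n$ large, all extend to virtual $F$-representations, and it is the same template used in the complex example of Section~\ref{sec: complex}, where no direct sign argument is available because the obstruction there only appears at the level of virtual bundles. For the lemma as stated, and for its application later in the section (where the bundle to be excluded is an honest rank-one bundle), your elementary argument is fully sufficient.
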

\begin{proof} 
Consider the infinite dimensional sphere $S^{\infty}$ as a the
universal $C_2$-space $EC_2$, where $C_2$ acts via the antipodal map
and let $F$ act on $S^{\infty}$ via first projection onto $F/H=C_2$
and then acting via $C_2$.  Now assume that there exists a real
$F$-vector bundle $\xi\col E \rightarrow S^2$ such that the
representation of $H$ on the fibers of $\xi$ is isomorphic to
$\lambda$. By Lemma \ref{lemma: real line bundles} there 
exists a real $F$-vector bundle $\xi\col E \rightarrow S^{\infty}$ such
that the representation of $H$ on the fibers of $\xi$ is isomorphic to
$\lambda$. By pulling back this bundle along the inclusion $S^n \rightarrow S^{\infty}$, there also
exists a real $F$-vector bundle $\xi\col E \rightarrow S^{n}$ such
that the representation of $H$ on the fibers of $\xi$ is isomorphic to
$\lambda$, for every $n\geq 2$.

By the real version of \cite[Theorem 5.1]{Jackowski} (see comments
below \cite[Theorem 5.1]{Jackowski} ), there are maps
\[   \alpha_{n}\col   RO(F)/I^n  \rightarrow  \mathrm{KO}^0_F(S^n)    \] 
that induce a map of inverse systems from $\{ RO(F)/I^n \}_{n\geq 0}
$ to $ \{ \mathrm{KO}^0_F(S^n)\}_{n\geq 0}$ that in turn induces 
an isomorphism of
pro-rings. Here $I$ is the kernel of the restriction map $RO(F)
\rightarrow RO(H)$. This implies that for sufficiently large $n\geq 1$
there exists a map $\beta_{1}\col\mathrm{KO}^0_F(S^n) \rightarrow R(F)/I$
making the following diagram commute
\[\xymatrix{RO(F)/I^n \ar[r]^{\alpha_n} \ar[dd]  
& \mathrm{KO}^0_F(S^n)  \ar[ddl]^{\beta_{1}} \ar[dd] \ar[dr]^{\varepsilon_F} &\\
	& &  RO(H)^{F/H} \\
	RO(F)/I \ar[r]^{\alpha_1} &  
\mathrm{KO}^0_F(S^1). \ar[ur]^{\varepsilon_F} }\]
This shows that the image of the restriction map 
\[      RO(F) \rightarrow RO(H)^{F/H}   \]
coincides with the image of the edge homomorphism 
\[    KO^{0}_F(S^n) \rightarrow RO(H)^{F/H}, \]
implying that the $H$-representations coming from the fibers of any real $F$-vector bundle over $S^n$ can be extended to virtual $F$-representations. However, since $\lambda$ does not lie in the
image of $\mathrm{RO}(F) \rightarrow \mathrm{RO}(H)$ by Lemma \ref{not
  in image} we arrive at a contradiction and conclude that there does
not exist a real $F$-vector bundle $\xi\col E \rightarrow S^2$ such that
the representation of $H$ on the fibers of $\xi$ is isomorphic to
$\lambda$. 
\end{proof}
Consider the projection $p\col G \rightarrow F$ and the compatible system
of real orthogonal representations \[ ([\lambda \circ p_{|S}])_{S \in
  \mathcal{F}}\in \lim_{G/S \in
  \orb}RO(S)=\mathrm{H}^{0}_{G}(\underline{E}G,RO(-)),\] and assume
that there exists a real $G$-vector bundle $\xi\col E \rightarrow
\mathbb{R}^2$ that realizes it. Since the kernel of $p\col G \rightarrow
F$ is $N$, it follows from the lemma below and our observations above
that $N \setminus \xi\col N\setminus E \rightarrow N\setminus X$ is an
$F$-vector bundle over $S^2$, where $F$ acts on $S^2$ via projection
onto $F/H=C_2$, followed by the antipodal map. Morever, the
representation of $H$ on the fibers of $N \setminus \xi$ is by
construction exactly $\lambda$. This however contradicts Lemma
\ref{lemma: no real bundle}, so we conclude that there does not exist
a real $G$-vector bundle $\xi\col E \rightarrow \mathbb{R}^2$ that
realizes the compatible system of real orthogonal representations
$(\lambda \circ p_{|S})_{S \in \mathcal{F}}$.

\begin{lemma} \label{lemma: quotient}
	
	Let $G$ be any discrete group with normal subgroup $N$ and let $X$ be a proper $G$-CW-complex. If $\xi\col E\rightarrow X$ is a
  $G$-vector bundle over $X$ such
  that $N\cap G_x$ acts trivially on $\xi^{-1}(x)$ for every $x \in
  X$, then \[N \setminus \xi\col N\setminus E \rightarrow N\setminus X\]
  is a $G/N$-vector bundle over $N\setminus X$. 

\end{lemma}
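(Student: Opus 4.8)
The plan is to prove the statement locally, around each $N$-orbit, the key point being that passing to the $N$-quotient turns the standard equivariant local model of $\xi$ into a \emph{trivial} bundle precisely because of the triviality hypothesis. First I would dispose of the group action: since $N$ is normal in $G$, the $G$-actions on $E$ and $X$ descend to $G/N$-actions on $N\setminus E$ and $N\setminus X$, and $N\setminus\xi$ is visibly $G/N$-equivariant. Granting that $N\setminus\xi$ is an honest vector bundle whose fibers carry the vector-space structure described below, the linearity of the $G/N$-action on fibers will be immediate, so it suffices to produce the vector bundle structure on $N\setminus\xi$ and then read off that $G/N$ acts by bundle isomorphisms.

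Next I would identify the fibers. For $x\in X$ set $N_x=N\cap G_x$, a finite group by properness. Each $n\in N$ induces a linear isomorphism $E_x\to E_{nx}$, and if $nx=n'x$ then $n^{-1}n'\in N_x$ acts trivially on $E_x$ by hypothesis, so this isomorphism depends only on the image point $nx$ and not on the chosen $n$. Hence there is a canonical linear identification between the fibers over any two points of a single $N$-orbit, and the composite $E_x\hookrightarrow E\to N\setminus E$ is a bijection onto $(N\setminus\xi)^{-1}(Nx)$. Transporting the vector-space structure along this bijection makes each fiber of $N\setminus\xi$ a vector space, well defined up to the canonical identifications just described.

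The heart of the argument is local triviality. Restricting the action along $N\hookrightarrow G$ presents $X$ as a proper $N$-CW-complex and $\xi$ as an $N$-vector bundle, so by the local structure theory for equivariant vector bundles over proper actions (see \cite[Section I.9]{Dieck} and \cite[Section 1]{LuckOliver}) there is, around each orbit $Nx$, an $N_x$-invariant open slice $U\ni x$ with $N\cdot U\cong N\times_{N_x}U$ together with an $N$-equivariant trivialization
\[ \xi|_{N\cdot U}\ \cong\ N\times_{N_x}(U\times E_x), \]
in which $N_x$ acts diagonally and on the factor $E_x$ through the isotropy representation at $x$. By hypothesis this isotropy representation is trivial, so passing to $N$-orbits yields
\[ N\setminus(\xi|_{N\cdot U})\ \cong\ N_x\setminus(U\times E_x)\ =\ (N_x\setminus U)\times E_x, \]
a trivial vector bundle over the open neighborhood $N_x\setminus U\cong N\setminus(N\cdot U)$ of $[x]$ in $N\setminus X$. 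These charts cover $N\setminus X$, and on overlaps their transition functions are the $N$-quotients of the (continuous, fiberwise linear) transition functions of $\xi$, hence again continuous and linear; therefore $N\setminus\xi$ is a vector bundle with the fibers described above.

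Finally I would record the $G/N$-vector-bundle structure: a coset $gN$ carries the fiber over $[x]$ to the fiber over $[gx]$, and under the canonical identifications above this map is induced by the linear isomorphism $g\col E_x\to E_{gx}$ and is therefore linear, so $N\setminus\xi$ is a $G/N$-vector bundle. The main obstacle is the quotient step: one must both invoke the equivariant local model and verify that the finite isotropy $N_x$ acts trivially on the central fiber $E_x$. This is exactly where the hypothesis is indispensable, since without it $N_x\setminus(U\times E_x)$ need not be locally trivial with constant fiber; over the various isotropy strata of $N_x\setminus U$ one would obtain quotients of $E_x$ of differing dimensions. The triviality assumption is precisely what forces the quotient of the local model to be a product, and checking this cleanly, including the continuity of the resulting transition functions, is the crux of the lemma.
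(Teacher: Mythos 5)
Your argument is correct and follows essentially the same route as the paper's proof: both reduce to the standard equivariant local model of $\xi$ around an orbit and observe that the triviality of the $N\cap G_x$-action on the fibre is exactly what makes the $N$-quotient of that local model a (trivial) vector bundle. The only cosmetic difference is that the paper writes the local model as a pullback of $G\times_H V\rightarrow G/H$ and checks that its quotient is again a pullback of $(G/N)\times_{\pi(H)}V\rightarrow (G/N)/\pi(H)$, whereas you restrict to the $N$-action and use slices, obtaining ordinary local triviality together with the linear $G/N$-action --- equivalent data given Definition \ref{def: vector bundle}.
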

\begin{proof} Denote the projection $G\rightarrow G/N=Q$ by $\pi$. Let
  us first consider the case where $\xi$ is trivial (trivial in the
  sense of \cite[Section 6.1.]{Luck}), i.e. assume $\xi$ is a pullback  
\[    \xymatrix{
G \times_H V \ar[r] & G/H \\
E \ar[u]^{r} \ar[r]^{\xi} & X \ar[u]^{p}  }\]
of the $G$-vector bundle $G\times_H V \rightarrow G/H$ along the
$G$-map $p\col X \rightarrow G/H$ where $H$ is some finite subgroup of $G$ and $V$ is a finite dimensional real $H$-representation such that $H\cap N$ acts trivially on $V$. Consider the
pullback diagram  
\[    \xymatrix{
Q \times_{\pi(H)} V \ar[r] & Q/\pi(H) \\
P \ar[u]^{w} \ar[r]^{q} & N\setminus X \ar[u]^{N \setminus p}  }\]
of the $Q$-vector bundle $Q\times_{\pi(H)} V \rightarrow Q/\pi(H)$
along the $Q$-map $N \setminus p\col N\setminus X \rightarrow Q/\pi(H)$.
We define the map 
\[  \psi\col N \setminus E \rightarrow P : \overline{(g,v,x)} \mapsto
(\pi(g),v,\overline{x}).  \] 
It is easy to check that $\psi$ yields a well-defined morphism of $Q$-equivariant bundles over $N \setminus X$. Moreover, since $\psi $ is a fiberwise linear map of $Q$-vector bundles that is a fiberwise isomorphism, it follows that $\psi$ is a homeomorphism.

Now consider the general case. Let $\overline{x} \in N \setminus
X$. Since $\xi\col E \rightarrow X$ is locally trivial, $x \in X$ has
an open $G$-neighourhood $U$ such that there is a $G$-map $p\col U
\rightarrow G/H$ where $H$ is finite subgroup of $G$ and $\xi_{|U}$ is (homeomorphic to) the
pullback 
\[    \xymatrix{
G \times_H V \ar[r] & G/H \\
\xi_{|U} \ar[u] \ar[r]^{\xi} & U \ar[u]^{p}  }\]
of the $G$-vector bundle $G\times_H V \rightarrow G/H$ along the $G$-map $p\col U \rightarrow G/H$. By the above, the quotient diagram
\[    \xymatrix{
Q \times_{\pi(H)} V \ar[r] & Q/\pi(H) \\
N\setminus \xi_{|U} \ar[u] \ar[r]^{N \setminus \xi} & N\setminus U \ar[u]^{N \setminus p}  }\]
is a pullback diagram. Since $N\setminus U$ is an open
$Q$-neighbourhood of $\overline{x}$, it
follows that $N \setminus \xi\col N\setminus E \rightarrow N\setminus X$
is a $Q$-vector bundle. 
\end{proof}

We finish this section by explaining how a similar approach to the one above can be used to produce a group $G$ admitting a
three dimensional cocompact model for $\underline{E}G$ that has a
compatible system of one-dimensional complex representations that
cannot be realized as a complex $G$-vector bundle over 
$\underline{E}G$. As in Section 3, let $F=C_4 \rtimes C_2$ be the dihedral group of order $8$ where $\sigma$ is a generator for  $C_4$. Let $H=\langle \sigma^2 \rangle$ be the center of $F$,  which has order two and denote the $3$-skeleton of the universal $F/H$-space $X=E(F/H)$ by $X^3$. We let $F$ act on $X$ and $X^3$ via the projection onto $F/H$. Consider the complex $1$-dimensional representation
\[\lambda\col H=\langle \sigma^2 \rangle \rightarrow 
\mathrm{U}(1)=S^{1}: \sigma^2 \mapsto -1. \]
By \cite[Th.~A \& Th.~8.3]{Leary} there exists a compact
$3$-dimensional locally CAT(0)-cubical complex $T_{X^3}$ equipped with
a free cellular $F/H$-action, an $F/H$-equivariant map $t_{X^3}\col
T_{X^3} \rightarrow X^3$ and an isometric cellular involution $\tau$
on $T_{X^3}$ that commutes with the $F/H$-action on $T_{X^3}$ and the
map $t_{X^3}$ such the induced $F/H$-equivariant map $$\langle \tau
\rangle \setminus T_{X^3} \rightarrow X^3$$ is a homotopy
equivalence. Note that $F/H$ acts freely on $\langle \tau \rangle
\setminus T_{X^3}$ since it acts freely on $X^3$. Hence $T_{X^3}$ is
also the $3$-skeleton of a universal $F/H$-space $Z$. So we may
continue assuming that $Z = X$ and $\langle \tau \rangle \setminus
T_{X^3}=X^3$.

Now let $Y$ be the univeral cover of $T_{X^3}$ and let $\Gamma$ be the
group of self-homeomorphism of $Y$ that lifts the action of $F/H
\oplus \langle \tau \rangle $ on $T_{X^3}$. Then $Y$ is a
3-dimensional CAT$(0)$-cubical complex on which $\Gamma$ acts
properly, compactly and cellularly. By construction there is a
surjection $\alpha\col \Gamma \rightarrow F/H \oplus \langle \tau \rangle
$ whose kernel $\mathrm{Ker}(\alpha)$ is the torsion-free group of
deck transformations of $Y \rightarrow T_{X^3}$. Let $\pi$ denote the
composition of $\alpha$ with the projection of $F/H \oplus \langle
\tau \rangle$ onto $F/H$. Since $F/H$ acts freely on $T_{X^3}$ and
every finite subgroup of $\Gamma$ must fix a point of $Y$ since $Y$ is
CAT$(0)$, it follows that every finite subgroup of $\Gamma$ is
contained in the kernel of $\pi$, which we denote by $N$. Now define
the group $G$ to be the pullback of $\pi\col \Gamma \rightarrow F/H$
along $F \rightarrow F/H$. Then $G$ acts on $Y$ via the quotient map
$G \rightarrow G/H = \Gamma$ that fits into the short exact sequence
\[ 1 \rightarrow N \rightarrow G \xrightarrow{p} F \rightarrow 1.  \]
such that $p$ maps all the finite subgroup of $G$ onto a finite
subgroup of $H$ and $N \setminus Y = X^3$.

Let $\mathcal{F}$ be the family of finite subgroups of $G$, note that
$Y$ is a three dimensional cocompact model for $\underline{E}G$ and
suppose that there exists a $G$-vector $\xi\col E \rightarrow Y$ whose
fibers give rise to the compatible system of representations \[
([\lambda \circ p_{|S}])_{S \in \mathcal{F}}\in \lim_{G/S \in
  \mathcal{O}_{\mathcal{F}}G}R(S).\]

Applying Lemma \ref{lemma: quotient}, we obtain an $F$-equivariant
complex line bundle $N \setminus \xi\col N \setminus E \rightarrow X$
such that the representation of $H$ on the fibers of $N \setminus \xi$
is isomorphic to $\lambda$. By Lemma \ref{lemma: line bundles}, this
bundle can be extended to an $F$-equivariant complex line bundle over
$X=E(F/H)$. We now continue in a similar fashion as in the proof of
Lemma \ref{lemma: no real bundle} to conclude that $[\lambda]$ is
contained in the image of the restriction map $R(F) \rightarrow
R(H)^{F/H}$, which contradicts Lemma \ref{lemma: image res}. We
conclude that the bundle $\xi$ cannot exist.
\section{Right angled Coxeter groups}

Let $\Gamma$ be a finite graph. We denote the vertex set of
$\Gamma$ by $S=V(\Gamma)$ and the set edges of $\Gamma $ by
$E(\Gamma)\subseteq V(\Gamma)\times V(\Gamma)$. The right angled
Coxeter group determined by $\Gamma$ is the Coxeter group $W$ with
presentation
\[       W = \langle S \ | \ s^2 \ \mbox{for all $s \in V(\Gamma)$ and
  \ }   (st)^2 \ \mbox{if $(s,t) \in E(\Gamma)$} \rangle .  \] 
Note that $W$ fits into the short exact sequence
\[   1 \rightarrow N \rightarrow W \xrightarrow{p} F = \bigoplus_{s\in
  S} C_2 \rightarrow 1\] where $p$ takes $s\in S$ to the generator of
the $C_2$-factor corresponding to $s$.  A subset $J \subseteq S$ is
called spherical if the subgroup $W_J=\langle J \rangle$ is finite
(and hence isomorphic to $\bigoplus_{s \in J} C_2$). The empty subset
of $S$ is by definition spherical. We denote the poset of spherical
subsets of $S$ orderded by inclusion by $\mathcal{S}$. If $J \in
\mathcal{S}$, then $W_J$ is called a spherical subgroup of $W$, while
a coset $wW_J$ is called spherical coset. We denote the poset of
spherical cosets, ordered by inclusion, by $W\mathcal{S}$. Note that
$W$ acts on $W\mathcal{S}$ by left multiplication, preserving the
ordering. The Davis complex $\Sigma$ of $W$ is the geometric
realization of $W\mathcal{S}$. One easily sees that $\Sigma$ is a
proper cocompact $W$-CW-complex. Since $\Sigma$ admits a complete
CAT(0)-metric such that $W$ acts by isometries, it follows that
$\Sigma$ is a cocompact model for $\underline{E}W$ (see
\cite[Th. 12.1.1 \& Th. 12.3.4]{DavisBook}). A consequence of this
fact is that every finite subgroup of $W$ is subconjugate to some
spherical subgroup of $W$. This implies that the group $N$ defined
above is torsion-free. Since the quotient space $W \setminus \Sigma$ is
homeomorphic to the geometric realization of the poset $\mathcal{S}$,
which is contractible since it has a minimal element, another
consequence is that the quotient $\underline{B}W=W\setminus
\underline{E}W$ is contractible.  We refer the reader to
\cite{DavisBook} for more details and information about these groups
and the spaces on which they act.\\

Let $\mathcal{F}$ be the family of finite subgroups of $W$. Given an
abelian group $A$, we denote by
\[\underline{A}\col \mathcal{O}_{\mathcal{F}}W \rightarrow \mathrm{Ab}\]
the trivial functor that takes all objects to $A$ and all morphism to
the identity map. One can verify that  
\begin{equation}  \label{eq: quotient cohomology}
  \mathrm{H}^{\ast}_W(\underline{E}W,\underline{A}) \cong
  \mathrm{H}^{\ast}(\underline{B}W,A). \end{equation} 
\begin{lemma}\label{lemma: const coef} Let $A=( [p_{|H}])_{H \in
    \mathcal{F}} \in \lim_{W/H \in
    \mathcal{O}_{\mathcal{F}}W}\mathrm{Rep}_{F}(H)$. For every $k\geq
  0$, the contravariant functor 
	\[    \mathcal{O}_{\mathcal{F}}W \rightarrow \mathrm{Ab}\col W/H
        \mapsto \pi_{k}(B_{\mathcal{F}}(W,A)^H)   \] 
	equals the trivial functor $\underline{\pi_k(BF)}$.
\end{lemma}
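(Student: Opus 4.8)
The plan is to prove the stronger statement that the contravariant functor $W/H \mapsto B_{\mathcal{F}}(W,A)^H$, viewed into the homotopy category, is naturally equivalent to the constant functor with value $BF$ in such a way that every morphism of $\mathcal{O}_{\mathcal{F}}W$ induces the identity up to homotopy; passing to $\pi_k$ then yields the claim. Since $F$ is abelian, the centralizer $C_F(p_{|H})$ of the image of any $\alpha_H = p_{|H}$ is all of $F$, so Lemma \ref{lemma: uni bun} already gives $B_{\mathcal{F}}(W,A)^H \simeq BC_F(p_{|H}) = BF$ for every finite $H \le W$. The real content is therefore the functoriality, i.e.\ that the maps induced by morphisms are all homotopic to the identity.

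First I would give an explicit model for the fixed point sets. Write $E = E_{\mathcal{F}}(W,A)$, so that $B_{\mathcal{F}}(W,A) = F\setminus E$ by Lemma \ref{lemma: uni bun}, where $E$ is the universal $(W\times F)$-CW-complex for the family $\mathcal{F}_A$ (Definition \ref{def: compatible}). For a finite subgroup $H\le W$ set $\Delta_H = \{(h,p(h)) : h\in H\} \le W\times F$ and let $\Delta = \{(w,p(w)):w\in W\}$ be the graph of $p$. Because $F$ is abelian, conjugation in $W\times F$ fixes the $F$-coordinate and $p(w_0 h w_0^{-1}) = p(h)$, so every conjugate of $\Delta_H$ again lies in $\Delta$; hence $\mathcal{F}_A$ is exactly the family of finite subgroups of $W\times F$ contained in $\Delta$. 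A short computation with the commuting actions then shows that a point $Fe$ of $F\setminus E$ is fixed by $H$ if and only if $e\in E^{\Delta_H}$, so that $B_{\mathcal{F}}(W,A)^H = F\setminus E^{\Delta_H}$. Since $\Delta_H\in\mathcal{F}_A$, the subcomplex $E^{\Delta_H}$ is contractible, and since the isotropy groups of $E$ lie in $\Delta$, the second factor $F$ acts freely on it; thus $E^{\Delta_H}$ is a model for $EF$ and $B_{\mathcal{F}}(W,A)^H = F\setminus E^{\Delta_H}$ is a model for $BF$, recovering the objectwise statement.

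Next I would identify the induced maps. A morphism $W/H\to W/K$ in $\mathcal{O}_{\mathcal{F}}W$ is represented by some $w\in W$ with $w^{-1}Hw\subseteq K$, and on fixed point sets it sends $Fe \mapsto F(we)$. Using $p(w^{-1}hw)=p(h)$ one checks that $e \mapsto we$ carries $E^{\Delta_K}$ into $E^{\Delta_H}$ and is equivariant for the second-factor $F$-action, so it is an $F$-map between two models of $EF$. Any two such maps are $F$-homotopic, and being $F$-equivariant it intertwines the deck ($=F$) actions; hence the induced map $F\setminus E^{\Delta_K} \to F\setminus E^{\Delta_H}$ induces the identity on $\pi_1 \cong F$ and is a homotopy equivalence. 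As $BF$ is aspherical, $\pi_k(BF)=0$ for $k\ge 2$ and $\pi_0$ is a point, so for those $k$ the functor is trivially constant, while for $k=1$ the above shows every morphism acts as the identity of $F$. In all cases the functor equals $\underline{\pi_k(BF)}$.

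The main obstacle is the bookkeeping in the middle step: verifying carefully that $Fe\in B_{\mathcal{F}}(W,A)^H \iff e\in E^{\Delta_H}$, and that $e\mapsto we$ is well defined on $F$-orbits and independent of the chosen representative $w$ of the morphism (replacing $w$ by $wk$ with $k\in K$ moves $e$ within its $F$-orbit in $E^{\Delta_K}$, using that $\Delta_K$ fixes $e$). Everything else reduces to the standard fact that $EF$ is terminal in the $F$-homotopy category of free $F$-CW-complexes, together with the abelianness of $F$, which forces all centralizers and all conjugation actions to be trivial.
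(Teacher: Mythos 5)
Your argument is correct, but it takes a genuinely different route from the paper's. The paper simply exhibits an explicit model: it puts the twisted $(W\times F)$-action $(w,f)\cdot(x,y)=(w\cdot x,\,p(w)f\cdot y)$ on $\underline{E}W\times EF$, checks that this is a model for $E_{\mathcal{F}}(W,A)$, and concludes that $B_{\mathcal{F}}(W,A)=\underline{E}W\times BF$ with $W$ acting trivially on the second factor; then $B_{\mathcal{F}}(W,A)^H=\underline{E}W^H\times BF\simeq BF$, and functoriality is immediate because every morphism of $\mathcal{O}_{\mathcal{F}}W$ acts only on the (contractible) first factor. You instead work with the abstract universal space $E=E_{\mathcal{F}}(W,A)$: you identify $B_{\mathcal{F}}(W,A)^H$ with $F\setminus E^{\Delta_H}$, observe that $E^{\Delta_H}$ is a free contractible $F$-CW-complex (using that all isotropy groups lie in the graph of $p$, which relies on $F$ being abelian so that conjugates of $\Delta_H$ stay in $\Delta$), and then reduce functoriality to the uniqueness up to $F$-homotopy of $F$-maps between models of $EF$, together with the canonical (basepoint-independent, since $F$ is abelian) identification $\pi_1(F\setminus EF)\cong F$. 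Your approach buys generality --- it never requires producing a concrete model and would adapt to other situations where $A$ factors through a homomorphism to an abelian $\Gamma$ --- at the cost of the fixed-point/quotient bookkeeping $B_{\mathcal{F}}(W,A)^H=F\setminus E^{\Delta_H}$, which in effect re-proves the third bullet of Lemma \ref{lemma: uni bun} in a form strong enough to track induced maps; the paper's product model makes both the objectwise identification and the functoriality visible at a glance, at the cost of the (elided) verification that the twisted product really is $E_{\mathcal{F}}(W,A)$. Both verifications are routine, and your handling of the well-definedness of $Fe\mapsto F(we)$ under change of coset representative is the right point to flag.
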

\begin{proof}
Let $EF$ be a contractible $F$-$CW$-complex with free $F$-action and
consider the product space $\underline{E}W \times EF$. This space
becomes a $(W\times K)-CW$-complex by letting $(w,f) \in W\times F$
act on $(x,y)\in\underline{E}W \times EF $ as 
\[   (w,f)\cdot (x,y) = (w\cdot x, p(w)f \cdot y). \]
One checks that with this action  $\underline{E}W \times EF$ is a
model for $E_{\mathcal{F}}(W,A)$, i.e.~$(\underline{E}W \times EF)^K$
is contractible when $K \in \mathcal{F}_A$ and empty otherwise. By
definition, it follows that $\underline{E}W \times BF$ is a model
$B_{\mathcal{F}}(W,A)$, where $W$ acts on trivially on the second
coordinate. Since $\underline{E}W^H$ is contractible for every $H \in
\mathcal{F}$, the lemma follows easily. 
	
\end{proof}
Let
$\Gamma$ be either the orthogonal group $\mathrm{O}(n,\mathbb{R})$ or
the unitary group $\mathbf{U}(n)$.
\begin{lemma} \label{lemma: splitting}
Every element of \[ \lim_{W/H \in
\mathcal{O}_{\mathcal{F}}W}\mathrm{Rep}_{\Gamma}(H)\] is of the form
$( [\lambda \circ p_{|H}])_{H \in \mathcal{F}}$ for some group
homomorphism $\lambda\col  F \rightarrow \Gamma$.
\end{lemma}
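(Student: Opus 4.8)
The plan is to strip the statement down to a purely representation-theoretic gluing problem for the characters of the elementary abelian $2$-groups $W_J$, and then to assemble a global character out of the local ones using the poset of spherical subsets. First I would observe that it suffices to produce a single $\lambda\col F\rightarrow\Gamma$ whose composite with $p$ agrees with $\alpha_{W_J}$ on every spherical subgroup $W_J$, $J\in\mathcal{S}$. Indeed, every finite subgroup of $W$ is subconjugate to some $W_J$, and since $F$ is abelian the map $p$ is conjugation invariant ($p(ghg^{-1})=p(h)$); hence for \emph{any} $\lambda$ the collection $([\lambda\circ p_{|H}])_{H\in\mathcal{F}}$ automatically lies in the limit, and it coincides with the given collection as soon as it does so on the $W_J$, the values on all subconjugates being forced by restriction and conjugation compatibility.

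Next I would pass to characters. Each $W_J\cong\bigoplus_{s\in J}C_2$ is a finite elementary abelian $2$-group, so every homomorphism $W_J\rightarrow\Gamma$ (for $\Gamma=\mathrm{U}(n)$ or $\mathrm{O}(n)$) is diagonalisable and is determined up to conjugacy by the multiplicities with which the one dimensional characters occur; thus $\mathrm{Rep}_\Gamma(W_J)$ is identified with the set of functions $m_J\col\widehat{W_J}\rightarrow\mathbb{Z}_{\geq 0}$ of total weight $n$. The inclusion $p_{|W_J}\col W_J\hookrightarrow F$ is injective, so the dual map $\widehat{F}\twoheadrightarrow\widehat{W_J}$ is surjective, and the compatibility of the collection along a face relation $K\subseteq J$ becomes the statement that $m_K$ is the pushforward of $m_J$ along $\widehat{W_J}\twoheadrightarrow\widehat{W_K}$. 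Since a homomorphism $\lambda\col F\rightarrow\Gamma$ is likewise the same datum as a multiplicity function $m\col\widehat{F}\rightarrow\mathbb{Z}_{\geq 0}$ of total weight $n$, with $\lambda\circ p_{|W_J}$ corresponding to the pushforward of $m$, the lemma is equivalent to the assertion that the compatible family $(m_J)_{J\in\mathcal{S}}$ arises as the system of marginals of a single nonnegative integral function $m$ on $\widehat{F}$.

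To build $m$ I would induct over the poset $\mathcal{S}$, which records the finite subgroup structure of $W$ and has the empty set as least element, so that $\underline{B}W=W\setminus\underline{E}W$ is contractible. Adjoining the vertices of $S$ one at a time, at each stage one must split the multiplicities already constructed according to the character value of the new vertex while respecting the marginals imposed by every maximal spherical subset containing it; each such splitting is a transportation-type problem whose feasibility comes from the equality of the relevant partial sums supplied by the compatibility conditions. Here I expect Lemma~\ref{lemma: const coef} to be the essential input: the fact that the fixed-point functor $W/H\mapsto\pi_k(B_{\mathcal{F}}(W,A)^H)$ is the \emph{trivial} functor $\underline{\pi_k(BF)}$ is exactly what ensures that the local character data over the various spherical subgroups are glued by constant, rather than twisted, transition maps, so that the assembly is governed by a constant coefficient system over the contractible base $\underline{B}W$.

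The main obstacle is precisely this final assembly: promoting the locally consistent clique marginals $(m_J)$ to a globally defined nonnegative joint multiplicity function on $\widehat{F}$. The restriction maps are surjective and the overlaps are organised by the contractible poset $\mathcal{S}$, so the consistency needed at each inductive step is available; the delicate point is to maintain nonnegativity throughout, i.e.\ to see that local consistency of the clique data forces global realisability. This is where the geometry of $\underline{E}W$ — through the triviality of the coefficient system in Lemma~\ref{lemma: const coef} — has to be used rather than a formal limit argument, and it is the step I would expect to require the most care.
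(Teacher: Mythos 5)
Your reformulation is accurate as far as it goes: after simultaneous diagonalization an element of $\mathrm{Rep}_{\Gamma}(W_J)$ is a multiplicity function of total mass $n$ on $\widehat{W_J}\cong\{\pm 1\}^{J}$, compatibility is consistency of marginals, the reduction to the spherical subgroups $W_J$ is legitimate, and the lemma becomes the assertion that these marginals are realized by a single multiplicity function on $\widehat{F}\cong\{\pm1\}^{S}$. But the write-up stops exactly where the mathematical content lies, and the reason you offer for feasibility of the inductive assembly is not valid. Pairwise consistency of clique marginals does \emph{not} in general imply the existence of a nonnegative joint realization: for a $4$-cycle $s_1\hbox{--}s_2\hbox{--}s_3\hbox{--}s_4\hbox{--}s_1$ with $n=2$, take the edge marginals concentrated on $\{\chi(s_i)=\chi(s_{i+1})\}$ for the edges $\{1,2\},\{2,3\},\{3,4\}$ and on $\{\chi(s_4)\neq\chi(s_1)\}$ for the edge $\{4,1\}$; every vertex marginal is $\lambda\oplus 1$, so all overlaps are consistent, yet no multiplicity function on $\{\pm1\}^{4}$ has these marginals. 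So "equality of the relevant partial sums" does not drive the induction, and the vertex-by-vertex transportation step can get stuck. Any correct argument must engage with this local-to-global failure directly rather than treat it as bookkeeping.

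The second ingredient you invoke, Lemma~\ref{lemma: const coef}, cannot supply the missing input: that lemma identifies the coefficient system $W/H\mapsto\pi_k(B_{\mathcal{F}}(W,A)^H)$ with a constant functor and is used, together with the contractibility of $\underline{B}W$, only in the obstruction-theoretic step of the theorem that \emph{follows} Lemma~\ref{lemma: splitting} (constructing a $W$-map $\Sigma\rightarrow B_{\mathcal{F}}(W,A)$ once $A$ is already known to come from $F$). The splitting lemma itself concerns the abstract limit $\lim\mathrm{Rep}_{\Gamma}(H)$ and the paper proves it with no geometric input at all: each $\alpha_H$ is conjugated into the diagonal subgroup of $\Gamma$, the compatible collection is asserted to be determined by the homomorphisms $\alpha_{\langle s\rangle}$ for $s\in S$, and $\lambda$ is defined by $\lambda((\sigma_s)_s)=\prod_{s}\alpha_{\langle s\rangle}(\sigma_s)$, using that diagonal matrices commute. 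Thus your route and the paper's diverge at the same point --- how the local diagonal representatives over the various spherical subgroups are glued into a single homomorphism of $F$ --- and your version makes that gluing problem explicit but does not close it.
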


\begin{proof} Every finite subgroup $H$ of $W$ is isomorphic to a
  finite direct sum of $C_2$'s. Since every element of order $2$ in
  $\Gamma$ is conjugate in $\Gamma $ to a diagonal matrix with $\pm 1$
  on the diagonal and commuting matrices can be simultaneously
  diagonalized (e.g. see \cite[Th. 1.3.12]{horn}), it follows that the
  image of every homomorphism $H 
  \rightarrow \Gamma$ is conjugate to a finite subgroup of $\Gamma$
  consisting of diagonal matrices. Hence, every element of  $\lim_{W/H \in
    \mathcal{O}_{\mathcal{F}}W}\mathrm{Rep}_{\Gamma}(H)$ is of the
  form  
$( [\alpha_H])_{H \in \mathcal{F}}$ where $\alpha_H\col H \rightarrow
\Gamma$ is a homomorphism whose image lands in the finite abelian
subgroup of $\Gamma$ consisting of diagonal matrices. Since every
finite subgroup of $W$ is 
subconjugate to a spherical subgroup $W_J$, the compatibility of the
representations tells us that $( [\alpha_H])_{H \in \mathcal{F}}$ is
completely determined by the homomorphisms $\alpha_{\langle s \rangle
}\col \langle s \rangle \rightarrow \Gamma$, for $s \in S$.
Since the images of the $\alpha_{\langle s \rangle }$ are diagonal,
they commute.  Therefore, one can define the homomorphism 
\[    \lambda\col F=\bigoplus_{s\in S} C_2 \rightarrow \Gamma: 
  (\sigma_s)_{s \in S} \mapsto \sum_{s\in S} \alpha_{\langle s
  \rangle}(\sigma_s). \] 
The compatibility of the representations implies that
 $$( [\lambda \circ p_{|H}])_{H \in \mathcal{F}}=( [\alpha_H])_{H \in
  \mathcal{F}} ,$$ 
proving the lemma.  
\end{proof}

The following theorem applies to both complex and real representations
and vector bundles. 

\begin{theorem} Let $W$ be a right angled Coxeter group. Every
  compatible collection of representations of the finite subgroups of
  $W$ can be realized as a $W$-equivariant vector bundle over the
  Davis complex $\Sigma=\underline{E}W$. 
\end{theorem}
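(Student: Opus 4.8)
The plan is to deduce the theorem directly from Lemma~\ref{lemma: splitting} by building the desired bundle as a trivial bundle equipped with a twisted $W$-action. Write $\Gamma$ for $\mathrm{U}(n)$ or $\mathrm{O}(n,\mathbb{R})$, so that a compatible collection of $n$-dimensional complex (resp. real orthogonal) representations of the finite subgroups of $W$ is exactly an element of $\lim_{W/H \in \mathcal{O}_{\mathcal{F}}W}\mathrm{Rep}_{\Gamma}(H)$. By Lemma~\ref{lemma: splitting}, every such collection has the form $([\lambda \circ p_{|H}])_{H \in \mathcal{F}}$ for a single homomorphism $\lambda\col F \to \Gamma$, where $p\col W \to F = \bigoplus_{s \in S} C_2$ is the quotient map. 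Since $\Gamma$ is the unitary (resp. orthogonal) group, $\lambda$ is precisely an $n$-dimensional complex (resp. real) representation of the finite group $F$ on $V = \mathbb{C}^n$ (resp. $V = \mathbb{R}^n$). The genuine content of the theorem therefore already lives in Lemma~\ref{lemma: splitting}: the compatibility condition, together with the fact that the finite subgroups of a right angled Coxeter group are elementary abelian $2$-groups assembled from the spherical generators, forces the collection to descend to an honest representation of the finite quotient $F$.

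Concretely, I would form the product $E = \Sigma \times V$ and equip it with the $W$-action
\[   w \cdot (x, v) = (w \cdot x, \ \lambda(p(w))\, v), \]
together with the projection $\xi\col E \to \Sigma$ onto the first factor. This projection is $W$-equivariant by construction, and each $w \in W$ acts on $E$ through a bundle isomorphism that is linear on fibers; hence $\xi$ is a complex (resp. real orthogonal) $W$-vector bundle over $\Sigma$ in the sense of Definition~\ref{def: vector bundle}. Being a product, it is automatically locally trivial, and it is defined over the given cocompact model $\Sigma = \underline{E}W$, so no additional topological input is required.

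It then remains to identify the image of $[\xi]$ under the edge homomorphism $\varepsilon_W$. For each $H \in \mathcal{F}$, choose a point $x \in \Sigma^H$, which is non-empty since $\Sigma$ is a model for $\underline{E}W$. The fiber $E_x = V$ carries the $H$-action $h \cdot v = \lambda(p(h))\,v$, so the isotropy representation at $H$ is exactly $\lambda \circ p_{|H}$, and therefore $\varepsilon_W([\xi]) = ([\lambda \circ p_{|H}])_{H \in \mathcal{F}}$ is the prescribed compatible collection. I do not anticipate a real obstacle at this stage: the hard part has already been dispatched in Lemma~\ref{lemma: splitting}, and the only residual point is the routine check that the constant fiberwise action $\lambda \circ p$ restricts correctly on each isotropy group, which is immediate. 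The argument treats the complex and real cases uniformly, as the statement promises.
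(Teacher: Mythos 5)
Your proof is correct, but it takes a genuinely different and more elementary route than the paper for the bundle-construction step. Both arguments hinge on Lemma~\ref{lemma: splitting}, which converts the compatible collection into a single homomorphism $\lambda\col F\to\Gamma$. The paper then constructs a $(W,F)$-principal bundle over $\Sigma$ realizing $A=([p_{|H}])_H$ by producing a $W$-map $\Sigma\to B_{\mathcal{F}}(W,A)$ via Bredon's equivariant obstruction theory (using Lemma~\ref{lemma: uni bun}, Lemma~\ref{lemma: const coef}, and the contractibility of $\underline{B}W$ to kill the obstruction groups), and finally passes to the associated bundle $E\times_F V$. You instead observe that $\lambda\circ p$ is an honest representation of the whole group $W$, so the twisted product $\Sigma\times V$ with action $w\cdot(x,v)=(wx,\lambda(p(w))v)$ already does the job; this is exactly the associated bundle of the \emph{trivial} $(W,F)$-bundle $\Sigma\times F$ with action $w\cdot(x,f)=(wx,p(w)f)$, so no classifying space or obstruction theory is needed. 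Your construction does satisfy Definition~\ref{def: vector bundle} (it is fiberwise linear, equivariant, and locally trivial, indeed globally trivial as a non-equivariant bundle), and the isotropy representation at any $x\in\Sigma^H$ is visibly $\lambda\circ p_{|H}$, so $\varepsilon_W([\xi])$ is the prescribed collection. What your shortcut buys is brevity and the extra information that the realizing bundle may be taken topologically trivial; what the paper's machinery buys is a method that would still apply when a compatible collection does \emph{not} extend to a homomorphism defined on all of $W$ -- here that generality is not needed precisely because Lemma~\ref{lemma: splitting} holds for right angled Coxeter groups.
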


\begin{proof} Consider $A=( [p_{|H}])_{H \in \mathcal{F}} \in
  \lim_{W/H \in \mathcal{O}_{\mathcal{F}}W}\mathrm{Rep}_{F}(H)$. It
  follows from Lemma \ref{lemma: uni bun} that the existence of a
  $(W,A)$-bundle over $\Sigma$ follows from the existence a $W$-map
  $\Sigma \rightarrow B_{\mathcal{F}}(W,A)$. Since by Lemma
  \ref{lemma: const coef}, the contravariant functor 
\[   \pi_k( B_{\mathcal{F}}(W,A)^{-})\col
\mathcal{O}_{\mathcal{F}}(W)\rightarrow \mathrm{Ab}: W/H \mapsto
\pi_k( B_{\mathcal{F}}(W,A)^{H})   \] 
equals the trivial functor $\underline{\pi_k( BF) }$ for all $k\geq
0$, it follows from (\ref{eq: quotient cohomology}) and the
contractibility of $\underline{B}W$ that the Bredon cohomology
groups $$\mathrm{H}^{k+1}_W(\Sigma, \pi_k(
B_{\mathcal{F}}(W,A)^{-}))$$ are zero for all $k\geq 0$. Since there
certainly exists a $W$-map from the $0$-skeleton of $\Sigma$ to
$B_{\mathcal{F}}(W,A)$, it follows from Bredon's equivariant
obstruction theory that there exists a $W$-map $\Sigma \rightarrow
B_{\mathcal{F}}(W,A)$.  

Now consider a compatible collection of representations of the finite
subgroups of $W$. By Lemma \ref{lemma: splitting}, this collection is
of the form  
\[( [\lambda \circ p_{|H}])_{H \in \mathcal{F}}\in \lim_{W/H \in
  \mathcal{O}_{\mathcal{F}}W}\mathrm{Rep}_{\Gamma}(H) \] for some
group homomorphism $\lambda\col F \rightarrow \Gamma$. Letting $A=(
[p_{|H}])_{H \in \mathcal{F}}$, it follows from the above that there
exists a $(W,A)$-bundle $\xi\col E \rightarrow \Sigma $. If
$\Gamma=\mathrm{O}(n,\mathbb{R})$ then  
\[    \xi\col  E \times_{F} \mathbb{R}^n \rightarrow \Sigma  \]
is a real $W$-vector bundle over $\Sigma$ that realizes $( [\lambda
  \circ p_{|H}])_{H \in \mathcal{F}}$, and if $\Gamma=U(n)$ then  
\[    \xi\col  E \times_{F} \mathbb{C}^n \rightarrow \Sigma  \]
is a complex $W$-vector bundle over $\Sigma$ that realizes $ ([\lambda
  \circ p_{|H}])_{H \in \mathcal{F}}$. Here $F$ acts on $\mathbb{R}^n$
or $\mathbb{C}^n$ via the map $\lambda$. 
\end{proof}

\begin{lemma} If $W$ is a right angled Coxeter group, then
  $\mathrm{H}^{n}_W(\Sigma,R(-))=0$ for all $n>0$, and 
$\mathrm{H}^0_W(\Sigma,R(-))$ is free abelian of rank 
equal to the number of spherical subgroups of $W$.  
\end{lemma}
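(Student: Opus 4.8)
The plan is to compute the Bredon cohomology directly from the cellular cochain complex of the $W$-CW-structure on $\Sigma=|W\mathcal{S}|$, after splitting the coefficient functor $R(-)$ into a direct sum of rank-one pieces indexed by the spherical subsets. First I would record the cochain complex. Since $\Sigma$ is the geometric realization of the poset $W\mathcal{S}$, the $W$-orbits of $n$-cells are in bijection with the $n$-simplices of $|\mathcal{S}|$, that is, with strictly increasing chains $J_0\subsetneq J_1\subsetneq\cdots\subsetneq J_n$ of spherical subsets; a representative is $W_{J_0}\subset\cdots\subset W_{J_n}$, whose stabiliser is the smallest group $W_{J_0}$. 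Hence
\[ C^n_W(\Sigma;R(-))=\prod_{J_0\subsetneq\cdots\subsetneq J_n} R(W_{J_0}), \]
with coboundary the alternating sum of the face maps: deleting $J_i$ for $i\geq 1$ keeps the stabiliser $W_{J_0}$ and induces the identity on $R(W_{J_0})$, while deleting $J_0$ raises the stabiliser to $W_{J_1}$ and induces the restriction $\mathrm{res}\colon R(W_{J_1})\to R(W_{J_0})$. In particular only restrictions between standard parabolics occur, with no surviving conjugations.

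The key algebraic step is to diagonalise these restriction maps. Writing $W_J\cong\bigoplus_{s\in J}C_2$ gives $R(W_J)\cong\bigotimes_{s\in J}R(C_2)$, and if $t_s\in R(C_2)$ denotes the sign representation of the factor indexed by $s$, then the elements $w_L=\prod_{s\in L}(t_s-1)$, for $L\subseteq J$, form a $\mathbb{Z}$-basis of $R(W_J)$. Since restriction to $W_{J'}$ fixes $t_s-1$ for $s\in J'$ and kills it for $s\notin J'$, one obtains the clean formula $\mathrm{res}(w_L)=w_L$ when $L\subseteq J'$ and $\mathrm{res}(w_L)=0$ otherwise. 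Thus in the $w_L$-basis every restriction map is a coordinate projection, and the entire cochain complex splits as a direct sum over spherical subsets $L$.

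For a fixed spherical subset $L$, the corresponding summand $C_L^\bullet$ involves only those chains with $J_0\supseteq L$ (equivalently all $J_i\supseteq L$); every coefficient group in it is a single copy of $\mathbb{Z}\cdot w_L$ and, by the formula above, every face coefficient map is the identity. Hence $C_L^\bullet$ is precisely the simplicial cochain complex, with constant coefficients $\mathbb{Z}$, of the order complex of the subposet $\mathcal{S}_{\geq L}=\{J\in\mathcal{S}\mid J\supseteq L\}$, so $\mathrm{H}^n(C_L^\bullet)\cong\mathrm{H}^n(|\mathcal{S}_{\geq L}|;\mathbb{Z})$. The poset $\mathcal{S}_{\geq L}$ has the minimum element $L$, so its realization is a cone and hence contractible; this cohomology is therefore $\mathbb{Z}$ concentrated in degree $0$. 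Summing over all spherical subsets $L$ gives $\mathrm{H}^n_W(\Sigma;R(-))=0$ for $n>0$ and $\mathrm{H}^0_W(\Sigma;R(-))\cong\bigoplus_{L\in\mathcal{S}}\mathbb{Z}$, which is free abelian of rank the number of spherical subsets, that is, the number of spherical subgroups.

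I expect the main obstacle to be the bookkeeping in the first two paragraphs: verifying that the face maps of $|W\mathcal{S}|$ induce exactly the restriction maps between standard parabolics with no residual conjugation, and checking the diagonalisation $\mathrm{res}(w_L)=w_L$ or $0$. Once these are in place, the contractibility of each $|\mathcal{S}_{\geq L}|$ makes the computation immediate. I note finally that the argument is uniform for real coefficients, since $RO(C_2)$ admits the same sign-representation basis and all irreducible real representations of $C_2^{n}$ are one-dimensional.
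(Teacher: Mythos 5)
Your argument is correct, but it takes a genuinely different route from the paper's. You work with the simplicial $W$-CW-structure on $\Sigma=|W\mathcal{S}|$, identify the orbits of $n$-cells with chains $J_0\subsetneq\cdots\subsetneq J_n$ in $\mathcal{S}$ with stabiliser $W_{J_0}$, and then split the coefficient system: in the basis $w_L=\prod_{s\in L}(t_s-1)$ of $R(W_{J_0})$ every restriction between standard parabolics becomes a coordinate projection, so the Bredon cochain complex decomposes as $\bigoplus_{L\in\mathcal{S}}C_L^{\bullet}$, where $C_L^{\bullet}$ is the constant-coefficient cochain complex of the order complex of $\mathcal{S}_{\geq L}$, a cone on its minimum $L$. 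The paper instead uses the cubical structure on $\Sigma$ (one orbit of $n$-cubes with stabiliser $(C_2)^n$ for each spherical $n$-subset): it computes $H^{*}_{C_2}(I,\partial I;R(-))\cong\mathbb{Z}$ concentrated in degree $0$ by hand, deduces the same for $(I^n,\partial I^n)$ via a K\"unneth formula using $R(H\times K)\cong R(H)\otimes R(K)$, and then runs an induction over the cubical skeleta. Your coefficient-splitting makes the degree-$0$ concentration and the rank count completely transparent --- one $\mathbb{Z}$, with explicit generator $w_L$ supported on the star of $L$, per spherical subset --- at the cost of the bookkeeping you rightly flag; that bookkeeping does go through, since each face inclusion is realised by the canonical projection $W/W_{J_0}\to W/W_{J_1}$ of identity cosets, so no conjugation twist appears. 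The paper's route avoids this by localising the whole computation to a single cube, where the relative cochain complex is small enough to compute directly. Both arguments, as you note, apply verbatim to $RO(-)$, since every irreducible real representation of $(C_2)^n$ is one-dimensional and $RO((C_2)^n)\cong R((C_2)^n)$.
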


\begin{proof}
This is proven in much the same way as the corresponding result for
homology in \cite{Sanchez}.  In more detail, one uses the cubical
structure on $\Sigma$, in which there is one orbit of $n$-cubes with
stabilizer isomorphic to $(C_2)^n$ for each $n$-tuple of commuting
elements of $S$.  (For each $n\geq 0$, for each spherical subgroup
$W_J\cong (C_2)^n$ and for each $w\in W$, the subposet consisting of
all special cosets contained in $wW_J$ is order isomorphic to the
poset of faces of an $n$-cube.  Furthermore this isomorphism is 
equivariant for the stabilizer subgroup $wW_Jw^{-1}\cong (C_2)^n$, 
acting on the $n$-cube as the group generated by reflections in 
its coordinate planes.  The realizations of these subposets
are the cubes that make up the cubical structure on $\Sigma$.  For
more details concerning the cubical structure on $\Sigma$
see~\cite[Ch.~1.1-1.2~or~Ch.~7]{DavisBook}.)  Since the stabilizer of
a cube of strictly positive dimension acts non-trivially on the cube,
this cubical structure is not a $W$-CW-structure on $\Sigma$.
However, its barycentric subdivision is a simplicial complex 
naturally isomorphic to the realization
of the poset $W\mathcal{S}$ as described in the introduction to this
section.  

Let $\Sigma^n$ denote the $n$-skeleton of $\Sigma$ with the 
cubical structure.  Firstly, $\Sigma^0$ consists of a single 
free $W$-orbit of vertices, so $\mathrm{H}^*_W(\Sigma^0;R(-))$ is 
isomorphic to the ordinary cohomology of a point; since $W$ acts freely 
the calculation reduces to an equivariant cohomology calculation
for the trivial group action.  

Let $I=[-1,1]$ be an interval, with $C_2$ acting by $x\mapsto -x$
(i.e., swapping the ends of the interval).  Note that $I$ is 
equivariantly isomorphic to the Davis complex for the Coxeter group $C_2$.  
Let $\partial I$ denote the two end points $\{-1,1\}$.  Make 
$I$ into a $C_2$-CW-complex, for example by taking three 0-cells 
in two orbits at the points $-1$, $0$ and $1$, and one free orbit 
of 1-cells consisting of the two intervals $[-1,0]$ and $[0,1]$.  
The cellular $C_2$-Bredon cochain complex for the pair $(I,\partial I)$ 
with coefficients in $R(-)$ is a cochain complex of free abelian 
groups in which the degree zero term has rank two, the degree one
term has rank one, and all other terms are trivial.  
A direct computation with this cochain 
complex shows that $H^m_{C_2}(I,\partial I;R(-))$ is 
isomorphic to $\mathbb{Z}$ for $m=0$ and is zero for $m>0$.  

Next consider $I^n$ with $C_2^n$ acting as the direct product 
of $n$ copies of the above action of $C_2$ on $I$.  This is the 
Davis complex for the Coxeter group $C_2^n$.  Since the representation
ring of a direct product of finite groups is naturally identified with
the tensor product of the representation rings~\cite[Ch.~3.2]{serre}, 
the $C_2^n$-Bredon cochain complex for the pair $(I^n,\partial I^n)$ 
with coefficients in $R(-)$ is naturally isomorphic to the tensor 
product of $n$ copies of the $C_2$-Bredon cochain complex for 
$(I,\partial I)$ with coefficients in $R(-)$.  (If one wants to 
think about this cochain complex geometrically, it arises from 
the $(C_2)^n$-CW-structure on $I^n$ in which the cells are the 
direct products of the cells arising in the $C_2$-CW-structure 
on $I$.)  Since these cochain complexes consist of finitely 
generated free abelian groups, there is a K\"unneth 
formula as described in for example~\cite[Thrm~60.3]{munkres}.  
Since $H^*_{C_2}(I,\partial I;R(-))$ is free abelian the 
K\"unneth formula implies that 
\[H^*_{C_2^n}(I^n,\partial I^n,R(-))\cong 
\bigotimes_{i=1}^n H^*_{C_2}(I,\partial I;R(-)).\]
It follows that for each $n$, $H_{C_2^n}^m(I^n,\partial I^n;R(-))$ is 
isomorphic to $\mathbb{Z}$ for $m=0$ and is zero for $m>0$.  

From these computations, it follows easily that
$H_W^m(\Sigma^n,\Sigma^{n-1};R(-))$ is zero for $m>0$ and is 
isomorphic to a direct sum of copies of $\mathbb{Z}$ indexed by  
the $W$-orbits of $n$-cubes in $\Sigma$.  By induction on $n$ one 
sees that $H_W^m(\Sigma^n;R(-))$ is zero for $m>0$ and 
isomorphic to a direct sum of copies of $\mathbb{Z}$ indexed 
by the $W$-orbits of cubes of dimension at most $n$ for $m=0$.  
The claimed result follows, since the $W$-orbits of cubes in 
$\Sigma$ are in bijective correspondence with the spherical 
subgroups of $W$.  
\end{proof}

Before stating our theorem concerning
$\mathrm{K}^*_W(\underline{E}W)$, we make some remarks concerning the 
representation ring of a direct sum of copies of the cyclic group 
$C_2$, indexed by a (finite) set $S$.  For any finite group $G$, the 
collection of all isomorphism types of 1-dimensional complex
representations of $G$ is an abelian group, with product given by 
taking the tensor product of representations.  Furthermore, this 
group is naturally isomorphic to the group $\mathrm{Hom}(G,\mathrm{U}(1))$.  
In the case when $G$ is abelian, every irreducible representation 
of $G$ is 1-dimensional, and so $\mathrm{Hom}(G,\mathrm{U}(1))$ forms a 
basis for the additive group of the representation ring.  In this 
way the representation ring $R(G)$ is naturally isomorphic to the 
integral group algebra of the group $\mathrm{Hom}(G,\mathrm{U}(1))$.  
In the case 
when $G=\bigoplus_{s\in S}C_2$ is a direct sum of copies of $C_2$ indexed by 
$S$, we may view $G$ as a vector space over the field of two elements, 
in which case $\mathrm{Hom}(G,\mathrm{U}(1))$ may be identified with the dual 
space.  For $s\in S$, let $s^*$ denote the 1-dimensional
representation of $G$ with the properties that $s^*(s)=-1$ and
$s^*(t)=1$ for $t\in S-\{s\}$.  let $S^*$ denote the set of these 
representations: $S^*:=\{s^* \ | \ s\in S\}$.  In terms of vector 
spaces over the field of two elements, 
$S^*\subseteq \mathrm{Hom}(G,\mathrm{U}(1))$ 
is the dual basis to the set $S\subseteq G$.  
The set $S^*$ generates the representation ring of $G$, giving rise 
to the following presentation: 
\[R(G)= \mathbb{Z}[S^*]/({s^*}^2-1 \ | \ s\in S),\]
in which the monomials $s_{1}^*s_{2}^*\cdots s_{k}^*$ for 
all subsets $\{s_1,\ldots,s_k\}\subseteq S$ correspond to the 
irreducible representations.  

Suppose now that $J$ is a subset of $S$.  The inclusion $J\subseteq S$ 
identifies $H=\bigoplus_{s\in J}C_2$ with a subgroup of 
$G=\bigoplus_{s\in J}C_2$.  The induced map $R(G)\rightarrow R(H)$ of 
representation rings is described easily in terms of the above ring
presentation: for $s\in J$, $s^*\in R(G)$ restricts to $s^*\in R(H)$, 
while for $s\notin J$, $s^*\in R(G)$ restricts to $1\in R(H)$.  

Now suppose that $\Gamma$ is a graph with vertex set $V(\Gamma)=S$, 
and let $W$ be the right angled Coxeter group associated to $\Gamma$.  
The abelianization of $W$ is naturally identified with 
$G=\bigoplus_{s\in S}C_2$.  There is a unique equivariant map 
$\alpha\col\underline{E}W\rightarrow *$, from the $W$-space $\underline{E}W$ 
to a point $*$, viewed as a $G$-space with trivial action.  If $J$ is a 
spherical subset of $S$ then $W_J=\bigoplus_{s\in J}C_2$ maps 
isomorphically to the corresponding subgroup of 
$G=\bigoplus_{s\in S}C_2$.  If $x\in \underline{E}W$ is a 0-cell fixed 
by $W_J=\bigoplus_{s\in J}C_2$, then $\alpha(x)=*$, and this map is
$W_J$-equivariant.  The induced map 
$\alpha^*\col\mathrm{K}^*_G(*)\rightarrow \mathrm{K}^*_W(\underline{E}W)$, and the
composite map $\mathrm{K}^*_G(*)\rightarrow \mathrm{K}^*_{W_J}(\{x\})$ will be 
used in the statement and proof of our theorem.  If we identify 
$R(G)$ with $\mathrm{K}^0_G(*)$ and $R(W_J)$ with 
$\mathrm{K}^0_{W_J}(\{x\})$, then 
the composite is identified with the restriction map.  

\begin{theorem} \label{thm: equivk} 
Let $W$ be the right angled Coxeter group determined by a finite 
graph $\Gamma$, with vertex set $S$, and let $G=\bigoplus_{s\in S}$ 
be the abelianization of $W$.  The map 
$\alpha^*\col\mathrm{K}^*_G(*)\rightarrow 
\mathrm{K}^*_W(\underline{E}W)$ is surjective 
in each degree.  In particular, $\mathrm{K}^1_W(\underline{E}W)=0$ 
and there is a ring isomorphism
\[   \mathrm{K}^0_{W}(\underline{E}W) \cong 
 \mathbb{Z}[S^*]/ ({s^*}^2-1 , s^*t^*-s^*-t^*+1 \ |  \ \mbox{$s\in
   S=V(\Gamma)$,$(s,t)\notin E(\Gamma)$}). \]
It follows that $\mathrm{K}^{0}_{W}(\underline{E}W) \cong \mathbb{Z}^d$
as an abelian group, where $d$ is the number of spherical subgroups 
of~$W$. 
\end{theorem}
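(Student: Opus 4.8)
The plan is to run the Atiyah--Hirzebruch spectral sequence (\ref{eq: ss complex}) for $X=\Sigma=\underline{E}W$ and feed in the Bredon cohomology computation of the preceding lemma. Since $\mathrm{K}^q_W(W/H)\cong\mathrm{K}^q_H(\ast)$ equals $R(H)$ for $q$ even and $0$ for $q$ odd, the coefficient system $\mathrm{K}^q_W(W/-)$ is $R(-)$ in even degrees and $0$ in odd degrees. The preceding lemma gives $\mathrm{H}^p_W(\Sigma,R(-))=0$ for $p>0$ and $\mathrm{H}^0_W(\Sigma,R(-))\cong\mathbb{Z}^d$, so the $E_2$-page is concentrated in the single column $p=0$ (nonzero only in even rows $q$). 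No differential $d_r$ can have both source and target nonzero, so the sequence collapses at $E_2$. This yields $\mathrm{K}^1_W(\Sigma)=0$ (all nonzero terms lie in even total degree) and, since the only nonzero graded piece of the filtration of $\mathrm{K}^0_W(\Sigma)$ is $E_\infty^{0,0}$, a ring isomorphism $\varepsilon_W\col\mathrm{K}^0_W(\Sigma)\xrightarrow{\cong}\lim_{W/H\in\mathcal{O}_{\mathcal{F}}W}R(H)$ given by the edge homomorphism of this multiplicative spectral sequence.

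Next I would prove $\alpha^*$ is surjective. Every class in $\mathrm{K}^0_W(\Sigma)$ is a difference of classes $[\beta]$ of honest $W$-vector bundles. For such a $\beta$ the edge homomorphism gives the honest compatible collection $\varepsilon_W[\beta]=([\beta_{e_H}])_H$ of fiber representations, which by Lemma \ref{lemma: splitting} (with $\Gamma=\mathrm{U}(n)$ and $F=G$) equals $(\mathrm{res}^G_H\lambda)_H=\varepsilon_W\alpha^*[\lambda]$ for some $\lambda\col G\to\mathrm{U}(n)$, the restrictions being taken along $p_{|H}\col H\hookrightarrow G$. As $\varepsilon_W$ is an isomorphism this forces $[\beta]=\alpha^*[\lambda]$, so every honest bundle class lies in the image of $\alpha^*$; since these generate $\mathrm{K}^0_W(\Sigma)$, the map $\alpha^*$ is surjective in degree $0$. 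In degree $1$ it maps onto $\mathrm{K}^1_W(\Sigma)=0$, so $\alpha^*$ is surjective in each degree. In particular the composite $R(G)\to\lim_{W/H}R(H)$, $V\mapsto(\mathrm{res}^G_H V)_H$, is onto.

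It remains to compute $\ker(\alpha^*)=\ker\big(R(G)\to\lim_{W/H}R(H)\big)$. Using the presentation $R(G)=\mathbb{Z}[S^*]/({s^*}^2-1)$ and the restriction rule ($s^*\mapsto s^*$ for $s\in J$, $s^*\mapsto 1$ for $s\notin J$), I would first check that each $(s^*-1)(t^*-1)=s^*t^*-s^*-t^*+1$ with $(s,t)\notin E(\Gamma)$ restricts to $0$ on every spherical $W_J$: if $s\notin J$ or $t\notin J$ one factor vanishes, while if $s,t\in J$ then $\{s,t\}$ is not a clique, so $J$ is not spherical and $W_J$ does not occur. Thus these elements lie in the kernel and $\alpha^*$ factors through $Q=\mathbb{Z}[S^*]/({s^*}^2-1,\ (s^*-1)(t^*-1)\mid (s,t)\notin E(\Gamma))$. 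A straightening argument using $s^*t^*=s^*+t^*-1$ rewrites any square-free monomial containing a non-adjacent pair as a $\mathbb{Z}$-combination of monomials supported on strictly smaller sets, so by induction $Q$ is spanned over $\mathbb{Z}$ by the $d$ monomials $\prod_{s\in K}s^*$ indexed by cliques $K$ of $\Gamma$ (the spherical subsets). Hence $Q$ is a quotient of $\mathbb{Z}^d$; combined with the surjection $Q\twoheadrightarrow\lim_{W/H}R(H)\cong\mathbb{Z}^d$ from the previous paragraph, the surjective composite $\mathbb{Z}^d\twoheadrightarrow Q\twoheadrightarrow\mathbb{Z}^d$ of a Hopfian group forces both maps to be isomorphisms. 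This gives the ring isomorphism $\mathrm{K}^0_W(\underline{E}W)\cong Q$ and identifies it with $\mathbb{Z}^d$ as an abelian group, where $d$ is the number of spherical subgroups.

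I expect the main obstacle to be this last step, namely showing that the explicit relations $(s^*-1)(t^*-1)$ generate the \emph{entire} kernel. Collapse of the spectral sequence and surjectivity of $\alpha^*$ are comparatively formal, but the kernel computation genuinely needs the combinatorial straightening together with the independent rank count $\dim_{\mathbb{Z}}\lim_{W/H}R(H)=d$: spanning by clique monomials only produces a surjection from $\mathbb{Z}^d$ onto $Q$, and it is the equality of ranks that rules out $Q$ being a proper quotient and pins down the presentation.
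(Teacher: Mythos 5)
Your proposal is correct and follows essentially the same route as the paper: collapse of the Atiyah--Hirzebruch spectral sequence via the vanishing of $\mathrm{H}^{p}_W(\Sigma,R(-))$ for $p>0$, identification of $\mathrm{K}^0_W(\underline{E}W)$ with $\lim_{W/H}R(H)$ by the edge homomorphism, and then pinning down this limit as $\mathbb{Z}[S^*]/I$ by exhibiting a surjection from $R(G)$ and matching free-abelian ranks (both equal to the number $d$ of spherical subgroups). The only difference is one of detail: you derive surjectivity of $\alpha^*$ from Lemma \ref{lemma: splitting} applied to honest bundle classes and make the kernel computation explicit via the straightening relation $s^*t^*=s^*+t^*-1$, whereas the paper obtains surjectivity of $\rho\col R(G)\to\lim_{W/H}R(H)$ directly from the naturality of the monomial presentations of $R(W_J)$; these fill in steps the paper leaves terse rather than changing the argument.
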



\begin{proof} Consider the Atiyah-Hirzebruch spectral sequence 
(\ref{eq: ss complex})
\[E_2^{p,q}=\mathrm{H}^{p}_W(\underline{E}W,\mathrm{K}^{q}_W(W/-))
\Longrightarrow \mathrm{K}_W^{p+q}(\underline{E}W)\] 
where $\mathrm{K}^{q}_W(W/-)=R(-)$ if $q$ is even and 
$\mathrm{K}^{q}_W(W/-)=0$ if $q$ is
odd (see \cite[Th. 3.2]{LuckOliver}). In the lemma above, we proved that
$\mathrm{H}^k_W(\Sigma,R(-))=0$ for $k>0$. It therefore follows that  
\[   \mathrm{K}^{n}_{W}(\underline{E}W) =\Big\{\begin{array}{cc}
\mathrm{H}^{0}_W(\underline{E}W,R(-)) = 
\lim_{W/H \in \mathcal{O}_{\mathcal{F}W}}R(H)& \mbox{if $n=0$} \\
0 & \mbox{if $n=1$.}
\end{array}   \]
Let $I$ be the ideal 
\[    ({s^*}^2-1 , s^*t^*-s^*-t^*+1 \ |  \ 
\mbox{$s\in S$, $(s,t)\notin E(\Gamma)$}) \]
in the polynomial ring $\mathbb{Z}[S^*]$.  Note that as an
abelian group $ \mathbb{Z}[S^*]/I$ is free, with basis elements
the commuting products $s^*_{1}\ldots s^*_k$, for all $J=\{s_1\ldots , s_k
\} \in \mathcal{S}$ (The case $J=\emptyset$ corresponds to the unit of $
\mathbb{Z}[V(\Gamma)]/I$ ). This shows that  
\[      \mathbb{Z}[S^*]/I \cong  \mathbb{Z}^d\] as an abelian
group, where $d$ is the number of spherical subgroups of $W$.

We claim there is an isomorphism of rings
\[  \lim_{W/H \in \mathcal{O}_{\mathcal{F}}W}R(H) \cong  \mathbb{Z}[S^*]/I. \]
Since every finite subgroup of $W$ is subconjugate to a spherical 
subgroup of $W$, it follows that 
\[ \lim_{W/H \in \mathcal{O}_{\mathcal{F}}W}R(H)\cong   \lim_{J \in \mathcal{S}}R(W_J) \]
as rings.  By the remarks in the paragraph preceeding the statement of 
the theorem, there are ring isomorphisms 
\[R(W_J)=\mathbb{Z}[J^*]/({s^*}^2-1\ | \ s\in J),\qquad 
R(G)=\mathbb{Z}[S^*]/({s^*}^2-1\ | \ s\in S),\] 
%
which are natural for inclusions $J\subseteq J'\subseteq S$.  
From this it follows that the natural ring homomorphism 
\[\rho\col R(G)\rightarrow \lim_{W/H\in \mathcal{O}_{\mathcal{F}W}} R(H)\] 
is surjective, and that $\lim_{W/H\in \mathcal{O}_{\mathcal{F}W}} R(H)$ 
is isomorphic to the ring described in the statement; in particular 
its additive group is free abelian of the same rank as
$\mathrm{K}^0_W(\underline{E}W)$.  Since $\rho$ factors through 
$\mathrm{K}^0_W(\underline{E}W)$, the claimed isomorphism follows.  
\end{proof}

Before stating our corollary concerning $\mathrm{K}^*(BW)$, we recall some 
facts from~\cite{atiyah} concerning $\mathrm{K}^*(BG)$, where as above 
$G=\bigoplus_{s\in S}C_2$.  For any finite group $H$, Atiyah showed
that $\mathrm{K}^i(BH)=0$ for $i$ odd, and that 
$\mathrm{K}^{2i}(BH)$ is naturally 
isomorphic to the completion of the representation ring $R(H)$ 
at its augmentation ideal.  To discuss the case of $G$, it is 
convenient to take new generators for $R(G)$; replace the irreducible 
representation $s^*$ by the degree zero virtual representation 
$\overline{s}=s^*-1$.  With respect to these generators one obtains 
the presentation 
\[R(G) = \mathbb{Z}[\overline{S}]/(\overline{s}(\overline{s}+2) \ |
\ s\in S),\] 
where $\overline{S}=\{\overline{s}\ | \ s\in S\}$.  If
$H=\bigoplus_{s\in J}C_2$, then of course there is a similar
description of $R(H)$, which is natural for the inclusion $J\subseteq
S$.  Note that if $s\notin J$, then the image of $\overline{s}$ under
the restriction map $R(G)\rightarrow R(H)$ is zero.

Completing $R(G)$, as described above, with respect to its
augmentation ideal gives rise to the following presentation for 
the ring $\mathrm{K}^0(BG)$: 
\[\mathrm{K}^0(BG) = 
\mathbb{Z}[[\overline{S}]]/(\overline{s}(\overline{s}+2) \ |
\ s\in S),\] 
which is natural for the inclusion $J\subseteq S$, and so also
describes the induced map $\mathrm{K}^0(BG)\rightarrow 
\mathrm{K}^0(BH)$.  The additive 
group of this ring is the direct sum of one copy of $\mathbb{Z}$,
generated by $1$, and for each non-empty subset $J\subseteq S$, one 
copy of the 2-adic integers, $\mathbb{Z}_2$, consisting of the set of 
power series in the element $\prod_{s\in J}\overline{s}$ with zero 
constant term.  

\begin{corollary} \label{cor: ktheory}  Let $W$ be the right angled
  Coxeter group determined by a finite graph $\Gamma$ with vertex set
  $S=V(\Gamma)$, and let $G=\bigoplus_{s\in S}C_2$ be the abelianization of
  $W$.  The induced map $\mathrm{K}^*(BG)\rightarrow 
\mathrm{K}^*(BW)$ is surjective in
  each degree.  In particular $\mathrm{K}^1(BW)=0$ and there is a ring
  isomorphism 
\[\mathrm{K}^0(BW)\cong
\mathbb{Z}[[\overline{S}]]/(\overline{s}(\overline{s}+2),\,\,
\overline{s}\overline{t} \ | \ s\in S,\,\,(s,t)\notin E(\Gamma)).\] 
%
Here, $\mathbb{Z}[[\overline{S}]]$ is the formal power series ring 
with $\mathbb{Z}$ coefficients in the variables 
$\overline{S}=\{\overline{s} \ | \ s\in S\}$.  
\end{corollary}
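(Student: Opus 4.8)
The plan is to derive the corollary from Theorem~\ref{thm: equivk} by invoking the version of the Atiyah--Segal completion theorem for infinite discrete groups due to L\"uck and Oliver. Since the Davis complex $\Sigma=\underline{E}W$ is cocompact, that theorem applies to $W$: writing $BW=EW\times_W\underline{E}W$, it identifies $\mathrm{K}^*(BW)$ with the completion of the equivariant $\mathrm{K}$-theory ring $\mathrm{K}^*_W(\underline{E}W)$ at the augmentation ideal $I\subseteq \mathrm{K}^0_W(\underline{E}W)=\lim_{W/H\in\mathcal{O}_{\mathcal{F}}W}R(H)$, namely the kernel of the virtual-dimension map to $\mathbb{Z}$. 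I would record this input as the Milnor-type exact sequence
\[0\to {\varprojlim_m}^1\,\mathrm{K}^{*-1}_W(\underline{E}W)/I^m\to \mathrm{K}^*(BW)\to \varprojlim_m \mathrm{K}^{*}_W(\underline{E}W)/I^m\to 0.\]

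By Theorem~\ref{thm: equivk} we have $\mathrm{K}^1_W(\underline{E}W)=0$ together with a ring isomorphism $\mathrm{K}^0_W(\underline{E}W)\cong \mathbb{Z}[S^*]/I'$, where $I'=({s^*}^2-1,\,s^*t^*-s^*-t^*+1)$. First I would pass to the generators $\overline{s}=s^*-1$; then $I'$ becomes $(\overline{s}(\overline{s}+2),\,\overline{s}\,\overline{t})$, the augmentation ideal $I$ becomes $(\overline{S})$, and $\mathrm{K}^0_W(\underline{E}W)\cong \mathbb{Z}[\overline{S}]/(\overline{s}(\overline{s}+2),\,\overline{s}\,\overline{t}:(s,t)\notin E(\Gamma))$. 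The even part of the limit is then a completion in commutative algebra: because $\mathbb{Z}[\overline{S}]$ is Noetherian and $\mathrm{K}^0_W(\underline{E}W)$ is a finitely generated module over it, $(\overline{S})$-adic completion is exact and commutes with passing to the quotient, so $\varprojlim_m \mathrm{K}^0_W(\underline{E}W)/I^m\cong \mathbb{Z}[[\overline{S}]]/(\overline{s}(\overline{s}+2),\,\overline{s}\,\overline{t})$, which is the asserted presentation of $\mathrm{K}^0(BW)$. For the odd part, the tower $\{\mathrm{K}^0_W(\underline{E}W)/I^m\}$ has surjective transition maps, hence satisfies the Mittag--Leffler condition and has vanishing $\varprojlim^1$; combined with $\mathrm{K}^1_W(\underline{E}W)=0$ the exact sequence above gives $\mathrm{K}^1(BW)=0$.

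For the surjectivity statement I would use naturality of the completion theorems with respect to the abelianization $W\to G$. This homomorphism induces $BW\to BG$ and hence $\mathrm{K}^*(BG)\to \mathrm{K}^*(BW)$, and under the identifications $\mathrm{K}^0(BG)\cong \widehat{R(G)}$ (Atiyah--Segal for the finite group $G$) and $\mathrm{K}^0(BW)\cong \widehat{\mathrm{K}^0_W(\underline{E}W)}$ (L\"uck--Oliver) this map is obtained by completing the surjective ring map $\alpha^*\col \mathrm{K}^0_G(*)=R(G)\to \mathrm{K}^0_W(\underline{E}W)$ of Theorem~\ref{thm: equivk} at the augmentation ideals. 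Since $\alpha^*$ is surjective and completion of finitely generated modules over a Noetherian ring preserves surjections, the map is surjective in degree $0$; in degree $1$ it is the map $0\to 0$ by the previous paragraph together with Atiyah's vanishing $\mathrm{K}^1(BG)=0$. Reading off the additive structure of the presentation then recovers $\mathrm{K}^0(BW)\cong \mathbb{Z}\oplus\bigoplus_{\emptyset\neq J}\mathbb{Z}_2$ summed over the non-empty spherical subsets $J$, exactly as for $\mathrm{K}^0(BG)$.

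I expect the main obstacle to be stating and applying the L\"uck--Oliver completion theorem correctly in the infinite-group setting: one must verify that its completion is taken precisely at the augmentation ideal $I$ of $\mathrm{K}^0_W(\underline{E}W)$, that it is natural for the map $W\to G$, and that the accompanying $\varprojlim$/$\varprojlim^1$ sequence has the form displayed above. Once this input is fixed the remaining work is routine, since $\mathrm{K}^0_W(\underline{E}W)$ is a finitely generated ring with the explicit presentation supplied by Theorem~\ref{thm: equivk} and its augmentation ideal is finitely generated, so every completion reduces to the commutative algebra of the power series ring $\mathbb{Z}[[\overline{S}]]$.
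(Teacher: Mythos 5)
Your proposal is correct and follows essentially the same route as the paper: apply the L\"uck--Oliver completion theorem for groups with cocompact $\underline{E}W$ to the presentation of $\mathrm{K}^*_W(\underline{E}W)$ from Theorem~\ref{thm: equivk}, change variables to $\overline{s}=s^*-1$ so the augmentation ideal becomes $(\overline{S})$, and identify the completion with the stated quotient of $\mathbb{Z}[[\overline{S}]]$. You supply more detail than the paper does on the $\varprojlim^1$ vanishing, the exactness of $(\overline{S})$-adic completion, and the naturality argument giving surjectivity of $\mathrm{K}^*(BG)\to\mathrm{K}^*(BW)$ (which the paper leaves implicit), but these are refinements of the same argument rather than a different approach.
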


\begin{proof} The version of the Atiyah-Segal completion theorem 
that is proven for infinite discrete groups admitting a cocompact 
model for the classifying space for proper actions 
in~\cite[Theorem 4.4.(b)]{LuckOliver} implies that  
\[   \mathrm{K}^{n}(BW)=  \mathrm{K}^{n}_{W}(\underline{E}W) _{\hat{J}}, \]
where the ideal $J$ is the kernel of the augmentation map $
\mathrm{K}^{n}_{W}(\underline{E}W) \rightarrow \mathbb{Z}$ that maps
vector bundles to their dimension.  Changing variables in the 
above theorem to $\overline{s}=s^*-1$, we see 
that $\mathrm{K}^i(BW)=0$ for $i$ odd and that $\mathrm{K}^0(BW)$ is the 
completion of the ring 
\[\mathbb{Z}[\overline{S}]/(\overline{s}(\overline{s}+2),\,\, 
\overline{s}\overline{t}\ | \ s\in S,\,\,(s,t)\notin E(\Gamma))\] 
with respect to the ideal generated by the set
$\overline{S}=\{\overline{s}\ | \ s\in S\}$.  
This completion is the ring described in the statement. 
\end{proof}

There is an alternative proof of Corollary~\ref{cor: ktheory} that 
does not use Theorem~\ref{thm: equivk} or results
from~\cite{LuckOliver}.  Instead one uses a description of $W$ as 
a free product with amalgamation.  
If the graph $\Gamma$ is not a complete graph, then there 
is an expression $\Gamma=\Gamma_1\cup \Gamma_2$,
$\Gamma_3=\Gamma_1\cap \Gamma_2$, in which each $\Gamma_i$ is a full 
subgraph of $\Gamma$ and has fewer vertices than $\Gamma$.  This 
gives an expression for $W$ as a free product with amalgamation 
$W=W_1*_{W_3}W_2$.  From this one obtains a Mayer-Vietoris sequence 
that can be used to compute $\mathrm{K}^*(BW)$.  To establish 
Corollary~\ref{cor: ktheory}, one shows 
by induction on $|S|$ that 
$\mathrm{K}^*(BW)$ is as described and that for 
each $J\subseteq S$, the map $\mathrm{K}^*(BW)\rightarrow 
\mathrm{K}^*(BW_J)$ is a 
split surjection.

\end{document}